\numberwithin{equation}{section}
\newcommand{\bigpare}[1]{\bigl(#1\bigr)}
\newcommand{\biggpare}[1]{\biggl(#1\biggr)}
\newcommand{\Bigpare}[1]{\Bigl(#1\Bigr)}
\newcommand{\bigbra}[1]{\bigl\{#1\bigr\}}
\newcommand{\Bigbrac}[1]{\Bigl[#1\Bigr]}
\newcommand{\bigset}[2]{\bigl\{#1\bigm|#2\bigr\}}
\newcommand{\norm}[1]{\| #1 \|}
\newcommand{\bignorm}[1]{\bigl\| #1 \bigr\|}
\newcommand{\Bignorm}[1]{\Bigl\| #1 \Bigr\|}
\newcommand{\biggnorm}[1]{\biggl\| #1 \biggr\|}
\newcommand{\bigabs}[1]{\bigl| #1 \bigr|}
\newcommand{\Bigabs}[1]{\Bigl| #1 \Bigr|}
\newcommand{\biggabs}[1]{\biggl| #1 \biggr|}
\newcommand{\jap}[1]{\langle #1 \rangle}
\def\a{\alpha}
\def\b{\beta}
\def\c{\gamma}
\def\d{\delta}
\def\g{\psi}
\def\l{\lambda}
\def\m{\mu}
\def\n{\nu}
\def\s{\sigma}
\def\x{\xi}
\def\y{\eta}
\newcommand{\F}{\Phi}
\newcommand{\G}{\Psi}
\renewcommand{\L}{\Lambda}
\renewcommand{\S}{\Sigma}
\def\re{\mathbb{R}}
\def\ze{\mathbb{Z}}
\def\pa{\partial}
\newcommand{\supp}{\mathrm{{supp}}}
\newcommand{\WF}{\mathrm{WF}}
\newtheorem{thm}{Theorem}
\newtheorem{lem}[thm]{Lemma}
\newtheorem{prop}[thm]{Proposition}
\newtheorem{cor}[thm]{Corollary}
\theoremstyle{definition}
\newtheorem{ass}{Assumption}
\theoremstyle{remark}
\newtheorem{rem}{Remark}
\numberwithin{equation}{section}
\title{Propagation of singularities for Schr\"odinger equations with 
modestly long range type potentials\footnote{2010 Mathematics Subject Classification 35B65, 35P25, 35A27.}}
\author{
Kazuki H{\sc orie}\footnote{Graduate School of Mathematical Sciences, 
University of Tokyo, 3-8-1 Komaba, Meguro Tokyo, 
153-8914 Japan.} 
\ and Shu N{\sc akamura}%
\footnote{Graduate School of Mathematical Sciences, 
University of Tokyo, 3-8-1 Komaba, Meguro Tokyo, 
153-8914 Japan. 
E-mail: {\tt shu@ms.u-tokyo.ac.jp}.  
Partially supported by JSPS Grant Kiban (A) 21244008.} }
\begin{document}
\maketitle

\begin{abstract}
In a previous paper by the second author \cite{Na2}, we discussed a characterization of the microlocal singularities for 
solutions to Schr\"odinger equations with long range type perturbations, using solutions to a 
Hamilton-Jacobi equation. In this paper we show that we may use Dollard type approximate solutions to the 
Hamilton-Jacobi equation if the perturbation satisfies somewhat stronger conditions. 
As applications, we describe the propagation of microlocal singularities for $e^{itH_0}e^{-itH}$ when the 
potential is asymptotically homogeneous as $|x|\to\infty$, where $H$ is our Schr\"odinger operator, 
and $H_0$ is the free Schr\"odinger operator, i.e., $H_0=-\frac12 \triangle$. We show 
$e^{itH_0}e^{-itH}$ shifts the wave front set if the potential $V$ is asymptotically homogeneous of order 1, 
whereas $e^{itH}e^{-itH_0}$ is smoothing if $V$ is asymptotically homogenous of order $\b\in (1,3/2)$. 
\end{abstract}


\section{Introduction}

We consider Schr\"odinger operator with variable coefficients on $\re^d$, $d\geq 1$: 
\[
H=-\frac12 \sum_{m,n=1}^d \frac{\pa}{\pa x_m} a_{mn}(x) \frac{\pa}{\pa x_n} +V(x)
\quad\text{on }L^2(\re^d).
\]
We assume the coefficients satisfy long range type conditions in the following sense: 

\begin{ass}
$a_{mn}$, $V\in C^\infty(\re^d;\re)$, and let $\m>0$. For any multi-index $\a\in \ze_+^d$, 
there is $C_\a>0$ such that 
\[
\bigabs{\pa_x^\a (a_{mn}(x)-\d_{mn})}\leq C_\a \jap{x}^{-\m-|\a|}, 
\quad\bigabs{\pa_x^\a V(x)}\leq C_\a \jap{x}^{2-\m-|\a|}, 
\]
for $x\in\re^d$, where $\pa_x=\pa/\pa x$ and $\jap{x}=(1+|x|^2)^{1/2}$. 
We also assume $(a_{mn}(x))_{m,n=1}^d$ is a positive symmetric matrix for 
each $x\in\re^d$. 
\end{ass}

Then it is well-known that $H$ is a self-adjoint operator with the domain $H^2(\re^d)$, 
the Sobolev space of order 2. We consider solutions to the Schr\"odinger eqaution:
\[
i\frac{\pa}{\pa t} \g(t)=H\g(t), \quad \g(0)=\g_0\in L^2(\re^d). 
\]
By the Stone theorem, the solution is given by $\g(t)=e^{-itH}\g_0\in  L^2(\re^d)$. 
It is also well-known that the singularities of the solution propagate with infinite speed, 
and hence the \textit{propagation of singularities theorem}\/ analogous to the 
solutions to the wave equation cannot hold. In \cite{Na2}, it is proved that the 
wave front set of $e^{-itH}\g_0$ is described in terms of the wave front set of 
$e^{-i\F(t,D_x)}\g_0$, where $\F(t,\x)$ is a solution to the Hamilton-Jacobi equation:
\[
\frac{\pa \F}{\pa t}(t,\x)= p\biggpare{\frac{\pa\F}{\pa\x}(t,\x),\x}, \quad t\in\re,\x\in\re^d, 
\]
and 
\[
p(x,\x) =\frac12\sum_{m,n=1}^d a_{mn}(x)\x_m\x_n +V(x), \quad x,\x\in\re^d, 
\]
 is the symbol of the Schr\"odinger operator $H$. 
 
 The purpose of this paper is to show that if $\m>1/2$, we may employ the Dollard type approximate solution, 
 or a modifier:
 \[
 \F_D(t,\x)= \int_0^t p(s\x,\x) ds
 \]
 to characterize the microlocal singularities of the solution. 
 One advantage to use the Dollard type modifier is that it is easy to compute, and hence 
 we can describe the behavior of the propagation explicitly for several cases. In particular, if $V(x)$ 
 is asymptotically homogeneous of order $\b\in [1,3/2)$, we give explicit characterization of the 
 wave front set of $e^{-itH}\g_0$ in terms of $e^{-itH_0}\g_0$. 
 
 Now we state our main result. Let $\exp(tH_k)$ be the Hamilton flow generated by 
 $k(x,\x)=\frac12 \sum_{m,n=1}^d a_{mn}(x)\x_m\x_n$, i.e., 
 \[
 (x(t),\x(t)) =\exp(tH_k)(x_0,\x_0)
 \]
 if $(x(t),\x(t))$ is the solution to the Hamilton equation:
 \[
 x'(t)=\frac{\pa k}{\pa \x}(x(t),\x(t)), \quad \x'(t)=-\frac{\pa k}{\pa x}(x(t),\x(t)), 
 \quad x(0)=x_0,\ \x(0)=\x_0.
 \]
 Under Assumption~A, it is well-known that $\exp(tH_k)$ is a diffeomorphism in $\re^d$ for any $t\in\re$. 
 We assume all the trajectories are nontrapping in the following sense: 

\begin{ass}
Let $ (x(t),\x(t)) =\exp(tH_k)(x_0,\x_0)$ with $\x_0\neq 0$. Then $|x(t)|\to\infty$ as $t\to\pm\infty$. 
\end{ass}

\begin{rem}
We may assume this nontrapping condition only for $(x_0,\x_0)$ we are looking at, but we assume 
global nontrapping condition to simplify the notation. 
\end{rem}

As we have mentioned above, we suppose Assumption~A with $\m>1/2$. For the later applications, 
it is convenient to suppose $V$ is decomposed to a \textit{long range part}\/ and 
a \textit{short range part}.

\begin{ass}
$a_{mn}(x)$ and $V(x)$ satisfies Assumption~A with $\m>1/2$. Moreover, 
$V(x)=V^{(L)}(x)+V^{(S)}(x)$, where $V^{(L)}$ satisfies Assumption~A with $\m>1/2$, and 
$V^{(S)}$ satisfies
\[
\bigabs{\pa_x^\a V^{(S)}(x)}\leq C_\a\jap{x}^{2-\n-|\a|}, \quad x\in\re^d, 
\]
with $\n>1$, $C_\a>0$. 
\end{ass}

Under these conditions, we can show the existence of the classical (long range) scattering. We set
\begin{align}
&p^{(L)}(x,\x)= k(x,\x)+V^{(L)}(x), \label{eq-Hamiltonians}\\
&\F(t,\x) =\int_0^t p^{(L)}(s\x,\x)ds, \quad \G(t,\x)=\int_0^t k(s\x,\x) ds. \label{eq-Dollard}
\end{align}

\begin{prop}
Let $(x_0,\x_0)\in\re^d\times(\re^d\setminus\{0\})$, and set $(x(t),\x(t))=\exp(tH_k)(x_0,\x_0)$. 
Then 
\[
x_\pm =\lim_{t\to\pm\infty} \bigpare{x(t)-\nabla_\x \G(t,\x)}, 
\quad \x_\pm =\lim_{t\to\pm\infty} \x(t)
\]
exist. If we write 
\[
W^{cl}_\pm\ :\ (x_\pm,\x_\pm)\mapsto (x_0,\x_0), 
\]
then $W^{cl}_\pm$ are diffeomorphisms in $\re^d\times (\re^d\setminus\{0\})$. 
\end{prop}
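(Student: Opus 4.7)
My plan is to interpret the formula for $x_\pm$ with $\xi$ standing for the current momentum $\xi(t)$ (this is the Dollard-type reading, and one can check that substituting $\xi_\pm$ or $\xi_0$ does not produce a finite limit for $\mu<1$), and to analyze the classical flow of $k$ in three stages.

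\emph{Linear escape and asymptotic momentum.} Since $k$ is conserved along its own flow, the ellipticity of $(a_{mn})$ together with $a_{mn}(x)\to\delta_{mn}$ at infinity keeps $|\xi(t)|$ bounded above, and Assumption~B keeps it bounded below by a positive constant for $|t|$ large. A bootstrap argument using $\dot x=\nabla_\xi k(x,\xi)=\xi+O(\jap{x}^{-\mu})$ then yields $|x(t)|\ge c|t|$ for $|t|$ large. Consequently $|\dot\xi(t)|=|\nabla_x k(x(t),\xi(t))|\le C|t|^{-1-\mu}$ is integrable, so $\xi_\pm:=\lim_{t\to\pm\infty}\xi(t)$ exists with $|\xi(t)-\xi_\pm|=O(|t|^{-\mu})$, and one further integration yields $|x(t)-t\xi(t)|=O(|t|^{1-\mu})$.

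\emph{Existence of $x_\pm$.} Set $y(t):=x(t)-\nabla_\xi\Psi(t,\xi(t))$. Using $\partial_t\nabla_\xi\Psi(t,\xi)=t\nabla_x k(t\xi,\xi)+\nabla_\xi k(t\xi,\xi)$ and $\dot x=\nabla_\xi k(x,\xi)$, one computes
\[
\dot y(t)=\bigl[\nabla_\xi k(x(t),\xi(t))-\nabla_\xi k(t\xi(t),\xi(t))\bigr]-t\nabla_x k(t\xi(t),\xi(t))-\nabla_\xi^2\Psi(t,\xi(t))\,\dot\xi(t).
\]
The first bracket is controlled by $|\nabla_x\nabla_\xi k|\cdot|x(t)-t\xi(t)|=O(|t|^{-1-\mu})\cdot O(|t|^{1-\mu})=O(|t|^{-2\mu})$. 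For the remaining two terms, the Hamilton equation together with a Taylor estimate on $\nabla_x k$ yields $\nabla_x k(t\xi(t),\xi(t))=-\dot\xi(t)+O(|t|^{-1-2\mu})$, while a direct computation using the structure of $\Psi$ gives $\nabla_\xi^2\Psi(t,\xi)=t\,I+O(|t|^{1-\mu})$, so
\[
-t\nabla_x k(t\xi(t),\xi(t))-\nabla_\xi^2\Psi(t,\xi(t))\,\dot\xi(t)=t\dot\xi(t)-t\dot\xi(t)+O(|t|^{-2\mu})=O(|t|^{-2\mu}).
\]
Hence $|\dot y(t)|=O(|t|^{-2\mu})$, which is integrable exactly because $\mu>1/2$, and $x_\pm:=\lim_{t\to\pm\infty}y(t)$ exists.

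\emph{Diffeomorphism property.} Smooth dependence of $\exp(tH_k)$ on initial data, combined with the locally uniform character of the estimates above on regions where $\xi_0$ stays away from $0$, shows that $(x_0,\xi_0)\mapsto(x_\pm,\xi_\pm)$ is smooth, hence so is $W^{cl}_\pm$. To establish bijectivity, I would solve the corresponding asymptotic initial value problem by a Banach fixed-point argument on a space of trajectories on $[T,\infty)$ (resp.\ $(-\infty,-T]$) with $T$ large: given $(x_\pm,\xi_\pm)$ with $\xi_\pm\neq 0$, construct the unique trajectory with the prescribed asymptotics, then evolve it back to $t=0$ by the Hamilton ODE. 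Smoothness of the inverse follows by differentiating the fixed-point equation, and together with the forward smoothness gives the diffeomorphism claim. The main obstacle is the cancellation in Stage~2: without it, both $-t\nabla_x k(t\xi(t),\xi(t))$ and $-\nabla_\xi^2\Psi(t,\xi(t))\dot\xi(t)$ are only $O(|t|^{-\mu})$, not integrable for $\mu\le 1$; it is precisely the fact that the leading $t\dot\xi(t)$ contributions cancel---which is exactly why the Dollard modifier $\Psi$ is adapted to the flow of $k$---that leaves an integrable $O(|t|^{-2\mu})$ remainder and pins down the threshold $\mu>1/2$.
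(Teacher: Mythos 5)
Your proposal is correct and follows essentially the same route as the paper: linear escape via nontrapping, integrability of $\dot\xi$ giving $\xi_\pm$ and $|x(t)-t\xi(t)|=O(\jap{t}^{1-\mu})$, and then the same key cancellation of the non-integrable $t\dot\xi(t)$ terms (which you organize via $\nabla_\xi^2\Psi(t,\xi)=tI+O(\jap{t}^{1-\mu})$, while the paper manipulates the integrand of $\partial_\xi\Gamma$ directly) leaving an $O(\jap{t}^{-2\mu})$ remainder integrable precisely for $\mu>1/2$. The diffeomorphism step is handled at the same level of detail as the paper (which likewise defers to the ``standard ODE method''), so no further comment is needed.
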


We prove Proposition~1 in Section~2. 

The next theorem is our main result: We denote the wave front set of $u\in\mathcal{S}'(\re^d)$ by 
$\WF(u)$. We write $D_x=-i\pa/\pa x$, and $F(D_x)$ denotes the Fourier multiplier with the symbol 
$F(\x)$. 

\begin{thm}
Suppose Assumptions~ B and C. Then for any $u\in L^2(\re^d)$, 
\[
\WF\bigpare{e^{i\F(t,D_x)} e^{-itH}u} = \bigpare{W_\pm^{cl}}^{-1}(\WF(u))
\]
for $\pm t>0$. 
\end{thm}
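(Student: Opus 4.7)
The strategy is to compare the Dollard phase $\F(t,\xi)$ with an exact Hamilton--Jacobi solution $\widetilde{\F}(t,\xi)$ constructed in \cite{Na2}, and to control the intertwining Fourier multiplier. By the main result of \cite{Na2}, there is a smooth $\widetilde{\F}(t,\xi)$ defined on $\{|\xi|\ge\e\}$ for all $t\in\re$ solving the Hamilton--Jacobi equation for $p^{(L)}$, such that
\[
\WF\bigpare{e^{i\widetilde{\F}(t,D_x)} e^{-itH} u} = \bigpare{W^{cl}_\pm}^{-1}(\WF(u)), \quad \pm t>0,
\]
with $V^{(S)}$ absorbed as a standard short range quantum remainder. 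The theorem thus reduces to showing that replacing $\widetilde{\F}$ by $\F$ does not alter the wave front set on the left-hand side.

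The analytic core is the claim that, under Assumption~C with $\m>1/2$, the residual
\[
b(t,\xi) := \widetilde{\F}(t,\xi) - \F(t,\xi)
\]
lies in $S^0_\xi$ uniformly in $t\in\re$ on $\{|\xi|\ge\e\}$, that is, $|\pa_\xi^\a b(t,\xi)|\le C_\a\jap{\xi}^{-|\a|}$. Heuristically, both $\widetilde{\F}$ and $\F$ reproduce the leading kinetic action together with its Dollard-corrected long-range tails; the difference $b$ captures only the defect between integrating $p^{(L)}$ along true $p^{(L)}$-trajectories starting at $(0,\xi)$ and along the free straight line $s\mapsto(s\xi,\xi)$. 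By the extremality of the classical action the linear correction vanishes, and the quadratic remainder involves $\mathrm{Hess}\,p^{(L)}$ times the squared trajectory deviation, which is driven by $\nabla V^{(L)}$ and $\nabla(a_{mn}-\d_{mn})$ of size $O(\jap{s\xi}^{1-\m})$; this produces an integrand of size $\jap{s\xi}^{-2\m}$, integrable in $s$ precisely when $\m>1/2$. Each further $\pa_\xi$ applied to $V^{(L)}(s\xi)$ or $a_{mn}(s\xi)$ costs a factor $s$ compensated by an additional $\jap{s\xi}^{-\m}$ from the chain rule, so the $S^0$ bound survives to all orders.

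Once $b(t,\xi)\in S^0_\xi$ uniformly in $t$, so does $e^{ib(t,\xi)}$, which is moreover elliptic since $|e^{ib}|=1$. Hence $e^{ib(t,D_x)}$ is an elliptic pseudodifferential operator of order zero, and in particular preserves wave front sets: $\WF(e^{\pm ib(t,D_x)}v)=\WF(v)$ for every $v\in\mathcal{S}'(\re^d)$. Using the commutativity of Fourier multipliers to write $e^{i\F(t,D_x)}=e^{-ib(t,D_x)}e^{i\widetilde{\F}(t,D_x)}$ and combining with the \cite{Na2} formula yields
\[
\WF\bigpare{e^{i\F(t,D_x)} e^{-itH} u} = \WF\bigpare{e^{i\widetilde{\F}(t,D_x)} e^{-itH} u} = \bigpare{W^{cl}_\pm}^{-1}(\WF(u)),
\]
completing the proof.

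The principal obstacle is the uniform-in-$t$ $S^0$-bound on $b(t,\xi)$ and its sharpness at the threshold $\m=1/2$. This requires revisiting the Hamilton--Jacobi construction of \cite{Na2} with enough symbolic precision to extract the residual $b=\widetilde{\F}-\F$ in the correct symbol class, not merely to bound $\widetilde{\F}$ itself. A secondary technicality is that $\F$ and $\widetilde{\F}$ are only smooth on $\{|\xi|\ge\e\}$; a microlocal cutoff away from $\xi=0$ is therefore needed, but this is standard since wave front set statements are microlocal away from the zero section.
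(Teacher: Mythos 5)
Your reduction hinges on the claim that $b(t,\xi)=\widetilde{\F}(t,\xi)-\F(t,\xi)$ lies in $S^0$ uniformly in $t$, and this claim is false under Assumption C. The defect satisfies $\pa_t b = p^{(L)}(\pa_\x\widetilde\F,\x)-p^{(L)}(t\x,\x)$, and the two base points differ by $O(\jap{t\x}^{1-\m})$ (cf.\ Lemma 7 and \eqref{eq2-4}). Your heuristic drops the factor $|\x|^2$ carried by the kinetic part: $\nabla_x k(x,\x)=\tfrac12\sum \nabla a_{mn}(x)\x_m\x_n=O(\jap{x}^{-1-\m}|\x|^2)$, so the metric contribution to $\pa_t b$ is of size $|\x|^2\jap{t\x}^{-2\m}$, and $\int_0^T|\x|^2\jap{t\x}^{-2\m}\,dt=|\x|\int_0^{T|\x|}\jap{\s}^{-2\m}d\s\sim |\x|$ for $\m>1/2$; a second-order perturbative computation with $a(x)=1+c\jap{x}^{-\m}$, $V=0$, $d=1$ confirms that $b\sim c^2|\x|$ with no cancellation. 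Likewise the potential contributes $\jap{t\x}^{1-\m}\cdot\jap{t\x}^{1-\m}$, giving $b=O(|\x|^{2-2\m})$, which is unbounded for $\m<1$. The appeal to stationarity of the action does not apply: $\F$ is not the Lagrangian action of a comparison path with the same endpoints as the characteristic, but the Hamiltonian $p^{(L)}(\cdot,\x)$ integrated along the free line with matched \emph{initial momentum}, so there is no quadratic gain (and even granting one, the kinetic Hessian $O(\jap{x}^{-2-\m}|\x|^2)$ times $\jap{t\x}^{2-2\m}$ still integrates to $O(|\x|)$). The consequence is not cosmetic: since $\pa_\x b$ tends to a nonzero limit along rays in the metric case, $e^{ib(t,D_x)}$ is a Fourier integral operator that \emph{translates} the wave front set; correspondingly, the wave map attached to the exact Hamilton--Jacobi phase in \cite{Na2} and the map $W^{cl}_\pm$ of Proposition~1 (defined through the Dollard phase $\G$) need not coincide, and identifying them is precisely the content that your argument would have to supply. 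A secondary point: $V^{(S)}$ obeys only $|V^{(S)}|\le C\jap{x}^{2-\n}$ with $\n>1$ and may be unbounded, so it cannot be ``absorbed as a standard short range quantum remainder'' without an argument.

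The paper avoids this comparison of phases altogether. It conjugates $H$ by $e^{i\F(t,D_x)}$ directly and shows (Lemmas 8 and 9) that the resulting effective time-dependent Hamiltonian $\ell(t;z,\x)=p(z+\pa_\x\F(t,\x),\x)-p^{(L)}(t\x,\x)$ is a symbol of order $1-\c$ with $\c=\min(2\m-1,\n-1)>0$; an Egorov-type construction then quantizes the flow $\S_t$ of $\ell$, and Theorem~6 identifies the high-energy limit of the rescaled flow $(\S^\l_{\l t})^{-1}$ with $W^{cl}_\pm$. Note that this route needs only that $\ell$ has order strictly less than $1$ (so the rescaled classical flow converges), which is exactly where $\m>1/2$ enters --- a much weaker statement than the order-zero bound on the accumulated phase defect that your proposal requires. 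If you want to keep your reduction, you would have to establish $b\in S^1_{1,0}$ with precise asymptotics of $\pa_\x b$ at infinity and show that the resulting translation matches the discrepancy between the two classical wave maps; that computation is essentially equivalent to the high-energy analysis of Section~2 that you were hoping to bypass.
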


By replacing $u$ by $e^{itH}u$, we obtain the following corollary: 

\begin{cor}
Under the same assumptions as Theorem~2, 
\[
\WF\bigpare{e^{-itH}u} = W_\mp^{cl}\bigpare{\WF\bigpare{e^{-i\F(t,D_x)} u}}
\]
for $\pm t>0$. In other words, $(x_0,\x_0)\in\WF\bigpare{e^{-itH}u}$ if and only if
$(x_\mp,\x_\mp)\in \WF\bigpare{ e^{-i\F(t,D_x)}u}$ when $\pm t>0$. 
\end{cor}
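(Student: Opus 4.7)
The plan is to derive the corollary from Theorem~2 by the substitution $u\mapsto e^{itH}u$ suggested by the author. Since $e^{itH}$ is unitary on $L^2(\re^d)$, the substitution preserves the hypothesis of Theorem~2, and the propagator factors $e^{-itH}\cdot e^{itH}$ on the left-hand side collapse to the identity. This yields
\[
\WF\bigpare{e^{i\F(t,D_x)}u}=\bigpare{W_\pm^{cl}}^{-1}\bigpare{\WF\bigpare{e^{itH}u}}\qquad\text{for }\pm t>0.
\]
Applying the diffeomorphism $W_\pm^{cl}$ to both sides then solves for $\WF(e^{itH}u)$ in terms of $\WF(e^{i\F(t,D_x)}u)$, giving the identity $\WF(e^{itH}u)=W_\pm^{cl}(\WF(e^{i\F(t,D_x)}u))$ for $\pm t>0$.

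To cast this into the form stated in the corollary, I would perform the change of variable $t\mapsto -t$. The condition ``$\pm t>0$'' flips to ``$\mp t>0$'', so relabeling the free sign turns $W_\pm^{cl}$ into $W_\mp^{cl}$; simultaneously $e^{itH}$ becomes $e^{-itH}$, and $e^{i\F(t,D_x)}$ becomes $e^{i\F(-t,D_x)}$, which the change of variable $s\mapsto -s$ in $\F(t,\x)=\int_0^t p^{(L)}(s\x,\x)\,ds$ identifies with $e^{-i\F(t,D_x)}$ on the level of wave front sets. The pointwise reformulation ``$(x_0,\x_0)\in\WF(e^{-itH}u)$ if and only if $(x_\mp,\x_\mp)\in\WF(e^{-i\F(t,D_x)}u)$'' is then just the tautological restatement of this set equality through the bijection $W_\pm^{cl}\colon(x_\pm,\x_\pm)\mapsto(x_0,\x_0)$ recorded in Proposition~1.

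There is no substantive analytic obstacle in this proof: the whole derivation is unitary algebra and sign bookkeeping, and all the microlocal content is already packaged in Theorem~2 and Proposition~1. The only point requiring brief verification is the identification $e^{i\F(-t,D_x)}\sim e^{-i\F(t,D_x)}$ at the level of wave front sets, which follows from the long-range decay rates enforced by Assumption~C with $\m>1/2$.
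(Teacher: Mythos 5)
Your first two steps reproduce the paper's own one-line argument: substituting $e^{itH}u$ for $u$ in Theorem~2 and applying the diffeomorphism $W_\pm^{cl}$ gives $\WF(e^{itH}u)=W_\pm^{cl}\bigpare{\WF(e^{i\F(t,D_x)}u)}$ for $\pm t>0$, and the relabelling $t\mapsto -t$ then yields, for $\pm t>0$,
\[
\WF\bigpare{e^{-itH}u}=W_\mp^{cl}\Bigpare{\WF\bigpare{e^{i\F(-t,D_x)}u}}.
\]
The gap is in your last step, the identification of $e^{i\F(-t,D_x)}$ with $e^{-i\F(t,D_x)}$ ``at the level of wave front sets.'' The change of variable $s\mapsto -s$ gives $\F(-t,\x)=-\int_0^t p^{(L)}(-s\x,\x)\,ds$, which equals $-\F(t,\x)$ only when $p^{(L)}(x,\x)$ is even in $x$. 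In general
\[
\F(t,\x)+\F(-t,\x)=\int_0^t\bigpare{p^{(L)}(s\x,\x)-p^{(L)}(-s\x,\x)}\,ds
\]
is, by Lemma~7, a symbol of order $2-\m$ in $\x$, and $2-\m>0$ throughout the long range regime $\m\le 1$ permitted by Assumption~C. The Fourier multiplier $e^{i(\F(t,D_x)+\F(-t,D_x))}$ is therefore \emph{not} wave-front-set preserving: multipliers of exactly this type, $e^{i\s V^{(L)}(D_x)}$ with $V^{(L)}$ of order $\ge 1$, are precisely the ones shown in Theorems~4 and 5 to translate or even remove singularities. Concretely, for $V^{(L)}(x)=v\cdot x$ one gets $\F(t,\x)+\F(-t,\x)=t^2\,v\cdot\x$, and $e^{it^2 v\cdot D_x}$ is translation by $t^2v$, so $\WF(e^{i\F(-t,D_x)}u)$ and $\WF(e^{-i\F(t,D_x)}u)$ differ by a nonzero shift in $x$. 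Hence your appeal to ``the long range decay rates enforced by Assumption~C with $\m>1/2$'' does not close this step; it would require $\m>2$, i.e.\ far more decay than assumed. You have in fact put your finger on an imprecision in the printed statement itself: the displayed formula is correct with $e^{i\F(-t,D_x)}$ in place of $e^{-i\F(t,D_x)}$ (the two coincide only under an evenness hypothesis on $a_{mn}$ and $V^{(L)}$), and the careful sign-tracking at the end of the proof of Theorem~4, where $V^{(L)}(\hat\x)$ is replaced by $V^{(L)}(-\hat\x)$ for $t<0$, is the correct way to handle the time reversal. Your proof should either carry the $\F(-t,\cdot)$ through to the conclusion or state the extra hypothesis under which $\F(-t,\x)=-\F(t,\x)$.
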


In the remaining of this Introduction, we consider the case $V(x)$ is asymptotically homogeneous 
as $|x|\to\infty$. Here we suppose $a_{mn}(x)=\d_{mn}$ for the sake of simplicity, though it is 
not really necessary. In this case,  $\G(t,\x)=\frac{t}{2}|\x|^2$, Assumption~B is satisfied, 
and the classical wave map $W^{cl}_\pm$ is the identity map: $(x_\pm,\x_\pm)=(x,\x)$. 

We first suppose $V(x)$ is homogeneous of order 1, i.e., 
\begin{equation}\label{eq-Homogeneous}
V^{(L)}(x) =|x| V^{(L)}(\hat x) \quad \text{if}\quad |x|\geq 1, 
\end{equation}
where $\hat x={x}/{|x|}\in S^{d-1}$. This condition implies $\pa_x V^{(L)}(x)= \pa_x V^{(L)}(\hat x)$ 
if $|x|\geq 1$, i.e., $\pa_x V^{(L)}(x)$ depends only on the direction $\hat x$ of $x$. 
We set
\[
S^\pm_\s(x,\x)= (x\pm \s\pa_x V^{(L)}(\pm \hat\x),\x), 
\quad \s\in\re, x,\x\in\re^d,\x\neq 0.
\]
$S^\pm_\s$ is the Hamilton flow generated by $V^{(L)}(\pm\x)$ : 
$S^\pm_\s=\exp\bigpare{\pm\s H_{V^{(L)}(\pm \x)}}$, if $|\x|\geq 1$. 

\begin{thm}
We suppose Assumption C, $a_{ij}(x)=\d_{ij}$,  \eqref{eq-Homogeneous}, and let $u\in L^2(\re^d)$. Then 
\[
\WF \bigpare{e^{itH_0} e^{-itH}u} =S^\pm_{(-t^2/2)}(WF(u)), \quad \pm t>0, 
\]
and hence 
\[
\WF\bigpare{e^{-itH}u} = S^\mp_{t^2/2}\bigpare{\WF\bigpare{e^{-itH_0}u}}.
\]
\end{thm}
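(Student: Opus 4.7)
The plan is to apply Theorem~2 and to extract the free Schr\"odinger part out of the Dollard phase, reducing the conclusion to the wave-front action of the single Fourier multiplier $e^{i\Phi_1(t,D_x)}$, where $\Phi_1(t,\xi):=\int_0^t V^{(L)}(s\xi)\,ds$. Under the hypotheses $a_{ij}=\delta_{ij}$ and \eqref{eq-Homogeneous}, the Hamilton flow $\exp(tH_k)$ is free motion, so Assumption~B holds trivially and $W^{cl}_\pm$ is the identity. The Dollard phase splits as $\Phi(t,\xi)=\Psi(t,\xi)+\Phi_1(t,\xi)$ with $\Psi(t,\xi)=\tfrac{t}{2}|\xi|^2$, and its two Fourier-multiplier pieces commute: $e^{i\Phi(t,D_x)}=e^{i\Phi_1(t,D_x)}e^{itH_0}$. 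Inserted into Theorem~2 this gives $\WF\bigl(e^{i\Phi_1(t,D_x)}\,[e^{itH_0}e^{-itH}u]\bigr)=\WF(u)$ for $\pm t>0$.

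The next step is to determine $\nabla_\xi\Phi_1(t,\xi)$ in the high-frequency regime $|\xi|\to\infty$. Since $V^{(L)}$ is homogeneous of degree~$1$ outside $\{|x|<1\}$, the gradient $\nabla V^{(L)}$ is homogeneous of degree~$0$ there, and a short integration yields $\nabla_\xi\Phi_1(t,\xi)=\int_0^t s\,\nabla V^{(L)}(s\xi)\,ds=\tfrac{t^2}{2}\,\nabla V^{(L)}(\pm\hat\xi)+O(|\xi|^{-2})$ for $\pm t>0$, the remainder coming only from the region $|s\xi|\leq 1$ where the homogeneity fails. This remainder is a symbol of order~$-2$ and defines a pseudodifferential operator that is microlocally the identity.

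Thus $\Phi_1(t,\cdot)$ is a classical real symbol of order~$1$ on $\re^d\setminus\{0\}$, so $e^{i\Phi_1(t,D_x)}$ is a Fourier integral operator whose underlying canonical transformation on $T^*\re^d\setminus 0$ is the shift $(y,\xi)\mapsto(y-\nabla_\xi\Phi_1(t,\xi),\xi)$; this is confirmed by a stationary-phase analysis of the oscillatory kernel $(2\pi)^{-d}\iint e^{i(x-y)\cdot\xi+i\Phi_1(t,\xi)}\,dy\,d\xi$. Inverting this canonical transformation on the identity of the first paragraph and substituting the expression for $\nabla_\xi\Phi_1$ from the second paragraph yields the first equation of the theorem; the second formula follows by replacing $u$ with $e^{itH}u$ and using $(S^\pm_\sigma)^{-1}=S^\pm_{-\sigma}$ together with the swap of the superscript under $t\mapsto-t$. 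The main technical point will be the canonical-relation statement for $e^{i\Phi_1(t,D_x)}$ when $\Phi_1$ has order~$1$: since the operator is not a bounded FIO in standard classes, the wave-front shift must be justified by a direct microlocal cut-off argument, and one must separately check that the $O(|\xi|^{-2})$ correction in $\nabla_\xi\Phi_1$ is truly smoothing.
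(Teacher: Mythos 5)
Your strategy is the same as the paper's: invoke Theorem~2 with $W^{cl}_\pm=\mathrm{id}$, factor $e^{i\F(t,D_x)}$ as $e^{i\F_1(t,D_x)}e^{itH_0}$ with $\F_1(t,\x)=\int_0^tV^{(L)}(s\x)\,ds$, and read off the wave front shift from the residual Fourier multiplier. The one structural difference is where the non-homogeneous correction is absorbed: the paper writes $\F_1(t,\x)=\tfrac{t^2}{2}V^{(L)}(\x)+F(t,\x)$ with $F(t,\cdot)\in S^{-1}_{1,0}$, disposes of $e^{iF(t,D_x)}$ as an elliptic order-zero pseudodifferential operator, and is left with $e^{i(t^2/2)V^{(L)}(D_x)}$, whose phase is \emph{exactly} homogeneous of degree one -- so the canonical-relation statement you flag as the ``main technical point'' is classical FIO theory (the paper's Remark~1 gives the alternative Egorov/hyperbolic-equation argument). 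Keeping the full non-homogeneous phase $\F_1$ in the exponent, as you do, forces you to justify the FIO property by hand; you would in practice perform the paper's splitting anyway, so nothing is gained by your variant.

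There is, however, a concrete problem in your last step, which you assert rather than carry out. Your canonical relation for $e^{i\F_1(t,D_x)}$, namely $(y,\x)\mapsto(y-\nabla_\x\F_1(t,\x),\x)$, is the correct one (test it on $\F_1=b\cdot\x$: $e^{ib\cdot D_x}u(x)=u(x+b)$ moves a singularity from $x_0$ to $x_0-b=x_0-\nabla_\x\F_1$). Inserting it into $\WF(u)=\WF\bigpare{e^{i\F_1(t,D_x)}w}$ with $w=e^{itH_0}e^{-itH}u$ and inverting gives
\[
\WF(w)=\bigset{(z+\nabla_\x\F_1(t,\x),\x)}{(z,\x)\in\WF(u)}
=S^\pm_{t^2/2}(\WF(u)),\quad \pm t>0,
\]
i.e.\ a shift by $+\tfrac{t^2}{2}\pa_xV^{(L)}(\hat\x)$ for $t>0$, whereas the statement to be proved reads $S^\pm_{(-t^2/2)}$; the two coincide for $t<0$ but differ by a sign for $t>0$. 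The discrepancy traces to the paper's equation (4.1), which assigns to $e^{i\s V^{(L)}(D_x)}$ the canonical transform $S^+_{\s}$ where your (and the translation example's) convention gives $S^+_{-\s}$. A direct check on the Stark potential $V=E\cdot x$ via the Avron--Herbst formula gives $e^{itH_0}e^{-itH}=c(t)\,e^{-itE\cdot x}e^{-i(t^2/2)E\cdot D_x}$, which shifts singularities by $+\tfrac{t^2}{2}E$ for $t>0$, supporting your sign rather than the displayed one. You must resolve this sign before claiming that your computation ``yields the first equation of the theorem''; as written, it does not, for $t>0$.
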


Theorem~4 implies the wave front set of the solution shifts according to 
the Hamilton flow generated by $V^{(L)}(\x)$, if the metric is flat and the potential is asymptotically 
homogeneous of order 1. 

We now turn to the case when $V(x)$ is asymptotically homogeneous of order $\b\in (1, 3/2)$. 
In this case, the behavior of the singularities are quite different. 
Since the quantization of $\exp(\s H_V(\x))$, $e^{-i\s V(D_x)}$ has diffusive properties similar to 
the free Schr\"odinger evolution group, we expect the vanishing of the singularities for 
$e^{itH_0}e^{-itH}u$ if $u$ decays rapidly as $|x|\to\infty$. In fact, we can prove the following: 

\begin{thm}
Suppose Assumptions B, C and 
\[
V^{(L)}(x) = |x|^\b V^{(L)}(\hat x) 
\quad \text{for}\quad |x|\geq 1,
\]
with $\b\in (1,3/2)$. We suppose, moreover, that $\nabla V^{(L)}(\hat x)\neq 0$ for $\hat x\in S^{d-1}$. 
If $e^{-itH_0}u\in L^{2,\infty}(\re^d) :=\bigset{f\in L^2(\re^d)}{\jap{x}^m f(x)\in L^2(\re^d) \text{ \rm for any }m}$, then  $e^{-itH}u\in C^\infty(\re^d)$, where $t\neq 0$.  
\end{thm}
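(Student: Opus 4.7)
The plan is to invoke Corollary~3 with $a_{mn}=\d_{mn}$, which makes $\G(t,\x)=\tfrac{t}{2}|\x|^2$ and $W_\pm^{cl}=\mathrm{id}$, reducing the claim to $e^{-i\F(t,D_x)}u\in C^\infty(\re^d)$. Writing $\F(t,\x)=\tfrac{t}{2}|\x|^2+\F_1(t,\x)$ with
\[
\F_1(t,\x):=\int_0^t V^{(L)}(s\x)\,ds,
\]
the problem reduces to showing $e^{-i\F_1(t,D_x)}v\in C^\infty(\re^d)$, where $v:=e^{-itH_0}u\in L^{2,\infty}(\re^d)$ by hypothesis. Plancherel turns the hypothesis into $\pa_\x^\gamma\hat v\in L^2(\re^d)$ for every multi-index $\gamma$.

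Next I would analyze the phase $\F_1$ using the homogeneity of $V^{(L)}$ of order $\b$ for $|y|\ge 1$. A direct computation, isolating the contribution from the region $|s\x|<1$ where $V^{(L)}$ is not homogeneous, yields
\[
\nabla_\x\F_1(t,\x)=\frac{|t|^{\b+1}}{\b+1}|\x|^{\b-1}\nabla V^{(L)}(\mathrm{sgn}(t)\hat\x)+O(|\x|^{-2}),\quad |\x|\to\infty,
\]
together with the symbol-type estimates $|\pa_\x^\a\F_1(t,\x)|\le C_{\a,t}\jap{\x}^{\b-|\a|}$ for every $\a$. The hypothesis $\nabla V^{(L)}(\hat x)\ne 0$ on $S^{d-1}$ and compactness of the sphere then give a uniform lower bound $|\nabla_\x\F_1(t,\x)|\ge c_t|\x|^{\b-1}$ for $|\x|\ge R_t$, with $c_t>0$ for $t\ne 0$. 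The assumption $\b>1$ is used exactly here to ensure that this lower bound tends to infinity with $|\x|$.

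The smoothness of $e^{-i\F_1(t,D_x)}v$ then follows from a non-stationary phase argument. Write
\[
\bigl(e^{-i\F_1(t,D_x)}v\bigr)(x)=(2\pi)^{-d/2}\int e^{i(x\cdot\x-\F_1(t,\x))}\,\hat v(\x)\,d\x,
\]
fix any bounded $U\subset\re^d$ and split $\hat v=\chi_0\hat v+(1-\chi_0)\hat v$ with $\chi_0\in C_c^\infty(\re^d)$ chosen so that $|x-\nabla_\x\F_1(t,\x)|\ge\tfrac12 c_t|\x|^{\b-1}$ for all $x\in U$ and $\x\in\supp(1-\chi_0)$. The low-frequency part is smooth in $x$, since the integral has compact $\x$-support. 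For the high-frequency part, introduce
\[
L:=\frac{x-\nabla_\x\F_1(t,\x)}{i|x-\nabla_\x\F_1(t,\x)|^2}\cdot\nabla_\x,
\]
which reproduces $e^{i(x\cdot\x-\F_1)}$. Iterating its formal transpose $L^t$ on the amplitude $(1-\chi_0)\hat v$ a total of $N$ times, the pointwise bounds $|\pa_\x^k\F_1|\le C_k|\x|^{\b-k}$ show that the resulting amplitude is bounded by $C_N|\x|^{-N(\b-1)}$ times sums of derivatives of $\hat v$ up to order $N$. Combining with the polynomial factor $\x^\a$ coming from $\pa_x^\a e^{ix\cdot\x}$ and applying Cauchy--Schwarz against $\pa_\x^\gamma\hat v\in L^2(\re^d)$, the integral converges provided $N(\b-1)>|\a|+d/2$, which is possible for every $\a$ since $\b>1$. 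Hence $\pa_x^\a(e^{-i\F_1(t,D_x)}v)\in L^\infty(U)$ for every $\a$ and every bounded $U$, giving the required smoothness.

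The principal technical obstacle I anticipate is the symbolic bookkeeping in the iterated integration by parts: one must verify that derivatives falling on the amplitude $(x-\nabla_\x\F_1)/|x-\nabla_\x\F_1|^2$ really generate terms of the expected order, which relies on the sharp estimates $\pa_\x^k\F_1=O(|\x|^{\b-k})$ and hence on an effective ``symbol calculus'' with the non-quadratic phase $\F_1$. The role of the upper bound $\b<3/2$ is not in this oscillatory analysis but in the first step: it is exactly the condition ensuring that $V^{(L)}$ satisfies Assumption~A with $\m>1/2$, so that Corollary~3 is applicable. The lower bound $\b>1$ is sharp since, in the critical case $\b=1$, $\nabla_\x\F_1$ remains bounded and Theorem~4 shows that only a symplectic shift occurs, not smoothing.
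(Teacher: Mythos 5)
Your proposal is correct, and its first half coincides with the paper's: both reduce, via Theorem~2/Corollary~3 with $W^{cl}_\pm=\mathrm{id}$, to showing that the Fourier multiplier with phase $\int_0^t V^{(L)}(s\x)\,ds$ smooths out $v=e^{-itH_0}u\in L^{2,\infty}$, and both extract the leading homogeneous behaviour $\tfrac{t^{1+\b}}{1+\b}|\x|^\b V^{(L)}(\hat\x)$ modulo an $S^{-1}_{1,0}$ remainder. Where you diverge is in the proof of the smoothing step. The paper first strips off the remainder as an elliptic order-zero factor $e^{iF(t,D_x)}$, so that only the exactly homogeneous multiplier $e^{i\s V^{(L)}(D_x)}$ remains, and then proves its Lemma~11 by iterating the exact commutation relation $x_j e^{i\s\L}u=-\s(\pa_{\x_j}V)(D_x)e^{i\s\L}u+e^{i\s\L}(x_ju)$ together with the ellipticity $\sum_j|\pa_{\x_j}V(\x)|\ge c|\x|^{\b-1}$; this yields the quantitative global weighted estimate $\|\jap{x}^{-N}e^{i\s\L}u\|_{H^{(\b-1)N}}\le C_N\|\jap{x}^Nu\|$, from which smoothness follows by Sobolev embedding. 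You instead keep the full phase $\F_1$ and run a non-stationary phase argument on the oscillatory integral, using the lower bound $|x-\nabla_\x\F_1(t,\x)|\gtrsim|\x|^{\b-1}$ for $x$ in a bounded set. The two arguments hinge on the identical analytic input ($\nabla V^{(L)}(\hat x)\ne0$ on $S^{d-1}$ and $\b>1$, so that the phase gradient grows), and your commutator-free version trades the paper's clean weighted estimate for a direct local $C^\infty$ conclusion; the symbolic bookkeeping you flag ($\pa_\x^\gamma\nabla_\x\F_1=O(|\x|^{\b-1-|\gamma|})$, and the corresponding estimates for derivatives of $(x-\nabla_\x\F_1)/|x-\nabla_\x\F_1|^2$, each $\x$-derivative of which gains an extra $|\x|^{-1}$) is routine given the homogeneity. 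Your closing observations on the roles of $\b<3/2$ (needed so that $V^{(L)}$ satisfies Assumption~A with $\m=2-\b>1/2$) and $\b>1$ (the borderline case $\b=1$ giving only the shift of Theorem~4) are both accurate.
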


Thus we observe that the propagation of singularities for $e^{-itH}$ depends drastically on the growth rate 
of the potential at infinity. 


\smallskip
The singularities of the solutions to Schr\"odinger equation has been studied by many mathematicians, 
mostly the smoothing properties with applications to nonlinear problems in mind. 
The explicit characterization of the wave front set of solutions was 
obtained relatively recently by Hassel, Wunsch \cite{HW}, Nakamura \cite{Na1}, \cite{Na2}, 
Ito, Nakamura \cite{IN1} and Martinez, Nakamura, Sordoni \cite{MNS}, \cite{MNS2} under different conditions. 
We note that closely related results had been obtained for perturbed harmonic oscillators (with constant 
principal part. See \=Okaji \cite{Ok}, Doi \cite{Do} and references therein), 
which does not require scattering theoretical framework (See also Mao, Nakamura \cite{MaoN} for 
non constant principal part cases). 

We call a Schr\"odinger operator satisfying Assumption~A with $\m>1$ {\it short range type}, 
though the potential $V(x)$ may be unbounded as $|x|\to\infty$, and not necessarily 
short range in the sense of scattering theory. The previous works above concern short range 
cases, except \cite{Na2}, \cite{MNS2}. We call a Schr\"odinger operator satisfying Assumption~A with $\m\in(0,1]$
{\it long range type}. In order to describe the microlocal singularities of solutions, we need to employ 
the framework of long range scattering theory (for the classical flow). In \cite{Na2}, a solution to 
the Hamilton-Jacobi equation at high energy is constructed for that purpose, but the construction is 
rather long and not easily computed. In this paper we use simpler Dollard type modifier 
(see, e.g., \cite{RS} Section XI.9) to characterize the 
microlocal singularities, and we wish this construction clarifies the analysis of \cite{Na2}. 

The result for the asymptotically homogeneous case, Theorem~4, is closely related to the work 
by Doi \cite{Do}, and Theorem~4 may be considered as a direct analogue of his result in our setting. 

We also remark that we can actually prove $e^{i\F(t,D_x)} e^{-itH}$ is a Fourier integral operator 
(in a slightly generalized sense) using the method of Ito, Nakamura \cite{IN3}, which we do not 
discuss in this paper. 


\smallskip
The paper is constructed as follows: 
We discuss the scattering theory for the classical mechanical flow in Section~2. We prove Theorem~2 in 
Section~3, mostly following the argument of \cite{Na2}. We prove properties of our main examples, 
i.e., Theorems~4 and 5 in Section~4. 

\smallskip
Throughout this paper, we use the following notation: We mainly work in $L^2(\re^d)$, and the norm 
$\norm{\cdot}$ denotes the $L^2$-norm unless otherwise specified. We write $\jap{x}=(1+|x|^2)^{1/2}$, 
which is standard in the microlocal analysis. $\ze_+=\{0,1,2,\dots\}$ denotes the set of non-negative integers, 
and $\ze_+^d$ is the set of multi-indices. For $\a=(\a_1,\dots,\a_d)\in\ze_+^d$, we denote $|\a|=\sum_j \a_j$.  
$S^m_{1,0}$ denotes the standard pseudodifferential operator symbol class, i.e., 
$a\in S^m_{1,0}$ means $a\in C^\infty(\re^d\times\re^d)$ such that for any 
$\a,\b\in\ze_+^d$, $K\Subset\re^d$, 
\[
\bigabs{\pa_x^\a\pa_\x^\b a(x,\x)}\leq C_{\a\b}\jap{\x}^{m-|\b|}, \quad x\in K, \x\in\re^d
\]
with some $C_{\a\b}>0$. $\mathcal{S}(\re^d)$ denotes the set of Schwartz functions. 
We denote various constants by $C$, which may change line to line. 

\smallskip
\noindent
{\bf Acknowledgement:} This paper is based on the master thesis of the first author (KH). 


\section{Classical mechanics and the high energy asymptotics}

Here we consider the existence of the scattering theory and the high energy asymptotics for the classical 
mechanical flow. We denote
\begin{align*}
&(x(t;x_0,\x_0),\x(t;x_0,\x_0))= \exp(tH_p)(x_0,\x_0), \\
& (y(t;x_0,\x_0)),\y(t;x_0,\x_0))=\exp(tH_k)(x_0,\x_0). 
\end{align*}
We first prove Proposition~1. We always suppose Assumptions~A, B with $1/2<\m\leq 1$. 

\begin{proof}[Proof of Proposition 1] 
We fix $(x_0,\x_0)\in\re^d\times \re^d$, $\x_0\neq 0$. It is well-known that
\begin{equation}\label{eq2-1}
|y(t;x_0,\x_0)|\geq c|t|-C, \quad t\in\re,
\end{equation}
with some $c,C>0$ if $(x_0,\x_0)$ is nontrapping (see, e.g., \cite{Na1} Lemma~2, \cite{Na2} Proposition~2.1). 
By the Hamilton equation, we have 
\begin{equation}\label{eq2-2}
\begin{aligned}
\biggabs{\frac{d}{dt} \y_j(t;x,x_0)} 
&= \biggabs{\frac12 \sum_{m,n=1}^d {\frac{\pa a_{mn}}{\pa x_j}}(y(t))\,\y_m(t)\y_n(t)} \\
&\leq C\jap{t}^{-1-\m}, \quad t\in\re.
\end{aligned}
\end{equation}
Here we have used the fact that $|\y(t)|$ is bounded uniformly in $t$ by virtue of the energy conservation: 
$k(x(t),\x(t))=k(x_0,\x_0)$. This implies the existence of 
\[
\x_\pm =\lim_{t\to\pm\infty} \y(t;x_0,\x_0) =\x_0+\int_0^{\pm\infty} \frac{d\y}{dt}(t;x_0,\x_0)dt.
\]
Moreover, \eqref{eq2-2} also implies 
\begin{equation}\label{eq2-3}
|\y(t)-\x_\pm| =\biggabs{\int_t^{\pm\infty} \frac{d\y}{dt}(t;x_0,\x_0)dt} \leq C\jap{t}^{-\m}, 
\quad \pm t>0.
\end{equation}
Similarly, we have 
\begin{align*}
&\biggabs{\frac{d}{dt} (y_j(t)-t\y_j(t))}\\
&\quad = \biggabs{\sum_m \bigpare{a_{jm}(y(t))-\d_{jm}}\y_m(t)
 +\frac{t}{2}\sum_{m,n}\frac{\pa a_{mn}}{\pa x_j}(y(t))\y_m(t)\y_n(t)} \\
&\quad \leq C\jap{t}^{-\m}, \quad t\in\re,
\end{align*}
and hence 
\begin{equation}\label{eq2-4}
|y(t)-t\y(t)|\leq C\jap{t}^{1-\m}, \quad t\in\re. 
\end{equation}
We compute $\frac{d}{dt}\bigpare{y(t)-\pa_\x\G(t,\y(t))}$ as follows. We have 
\begin{align*}
\frac{\pa\G}{\pa \x_j}(t,\x) 
&= \int_0^t \biggpare{s\frac{\pa k}{\pa x_j}(s\x,\x)+\frac{\pa k}{\pa \x_j}(s\x,\x)}ds  \\
&= \int_0^t \biggpare{\frac{s}{2}\sum_{m,n} \frac{\pa a_{mn}}{\pa x_j}(s\x)\x_m\x_n 
+\sum_m a_{jm}(s\x)\x_m}ds,
\end{align*}
and then 
\begin{align*}
\frac{d}{dt}\biggpare{\frac{\pa\G}{\pa \x_j}(t,\y(t))} 
&= \frac{t}{2}\sum_{m,n} \frac{\pa a_{mn}}{\pa x_j}(t\y)\y_m\y_n +\sum_m a_{jm}(t\y)\y_m \\
&+\sum_i \int_0^t  \biggpare{\frac{s^2}{2}\sum_{m,n} \frac{\pa^2 a_{mn}}{\pa x_j\pa x_i}
(s\y)\y_m\y_n +s\sum_m \frac{\pa a_{mi}}{\pa x_j}(s\y)\y_m \\
&\hspace{2.5cm} +s \sum_m \frac{\pa a_{jm}}{\pa x_i}(s\y)\y_m +a_{ji}(s\y)}ds\times \frac{d\y_i}{dt}.
\end{align*}
We remark that $\y$ in the integrand is $\y(t)$, not $\y(s)$. We also note that the last term 
can be rewritten as 
\begin{align*}
\sum_i \int_0^t a_{ji}(s\y(t))\frac{d\y_i}{dt}(t) ds 
&= t\frac{d\y_j}{dt}(t)+ \sum_i \int_0^t \bigpare{a_{ji}(s\y)-\d_{ji}}ds \times \frac{d\y_i}{dt}(t)\\
&=-\frac{t}{2} \sum_{m,n} \frac{\pa a_{mn}}{\pa x_j}(y(t))\y_m(t)\y_n(t) +O(\jap{t}^{-2\m})
\end{align*}
by using \eqref{eq2-2}. Combining these with Assumption~A, we have 
\[
\begin{aligned}
& \frac{d}{dt}\biggpare{y_j(t)-\frac{\pa\G}{\pa \x_j}(t,\y(t))} \\
&\qquad = -\frac{t}{2}\sum_{m,n} \biggpare{\frac{\pa a_{mn}}{\pa x_j}(y(t))-\frac{\pa a_{mn}}{\pa x_j}(t\y(t))}
\y_m(t)\y_n(t) \\
&\qquad \quad + \sum_m \bigpare{a_{jm}(y(t))-a_{jm}(t\y(t))}\y_m(t) +O(\jap{t}^{-2\m}). 
\end{aligned}
\]
Using Assumption~A again with \eqref{eq2-4}, we obtain 
\[
\biggabs{\frac{d}{dt}\biggpare{y(t)-\frac{\pa\G}{\pa \x}(t,\y(t))}}\leq C\jap{t}^{-2\m}, \quad t\in\re. 
\]
Since $2\m>1$, this implies the existence of 
\[
x_\pm =\lim_{t\to\pm\infty} \biggpare{y(t)-\frac{\pa\G}{\pa \x}(t,\y(t))}.
\]
The assertion: $W_\pm^{cl}$ is a diffeomorphism can be proved by the standard ODE method, 
once we have the integrability. 
\end{proof}

In the proof of Theorem~2, we actually consider the high energy asymptotics of $(x(t),\x(t))$ for fixed $t$. 
If $V=0$, i.e., for $(y(t),\y(t))$, we have the scaling property: 
\[
(y(t;x_0,\l\x_0),\y(t;x_0,\l\x_0))
=(y(\l t;x_0,\x_0),\l \y(\l t;x_0,\x_0))
\]
for any $\l>0$. Hence we learn
\[
\lim_{\l\to\infty} {\frac1\l \y(t;x_0,\l\x_0)} 
=\lim_{\l\to\infty} \y(\l t;x_0,\x_0) =\x_\pm 
\]
if $\pm t>0$. Since 
\[
\G(t,\l\x) =\int_0^t k(s\l\x,\l\x)ds =\int_0^{\l t} \l^2 k(\s\x,\x)\frac{d\s}{\l} =\l \G(\l t,\x),
\]
we have 
\[
\frac{\pa \G}{\pa \x}(t,\l\x)=\frac{\pa \G}{\pa\x}(\l t,\x).
\]
Using this, we also learn 
\begin{align*}
&\lim_{\l\to\infty} \biggpare{y(t;x_0,\l\x_0)-\frac{\pa\G}{\pa\x}(t,\y(t;x_0,\l\x_0))} \\
&\quad = \lim_{\l\to\infty} \biggpare{y(\l t;x_0,\x_0)-\frac{\pa\G}{\pa\x}(\l t,\y(t;x_0,\x_0))}
=x_\pm
\end{align*}
if $\pm t>0$. We have similar high energy asymptotics for $(x(t;x_0,\l\x_0), \x(t;x_0,\l\x_0))$ 
if we replace $\G(t,\x)$ by $\F(t,\x)$: 

\begin{thm}
Let $(x_0,\x_0)\in\re^d\times\re^d$, $\x_0\neq 0$, and set 
$(x(t;x_0,\l\x_0), \x(t;x_0,\l\x_0))=\exp(tH_p)(x_0,\l\x_0)$ as above. Then 
\begin{align*}
& x_\pm = \lim_{\l\to\infty} \biggpare{x(t;x_0,\l\x_0)-\frac{\pa\F}{\pa\x}(t,\y(t;x_0,\l\x_0))},\\
& \x_\pm =\lim_{\l\to\infty} \frac1\l \x(t;x_0,\l\x_0)
\end{align*}
if $\pm t>0$, where $(x_0,\x_0)= W_\pm^{cl}(x_\pm,\x_\pm)$.
The convergence is locally uniform with their derivatives. 
\end{thm}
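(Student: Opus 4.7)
I would prove Theorem~6 by reducing it to the high-energy asymptotic for the pure $k$-flow which was already established just before the statement. Introduce the rescaled trajectory
\[
\tilde x_\l(\s):=x(\s/\l;x_0,\l\x_0),\qquad \tilde\x_\l(\s):=\l^{-1}\x(\s/\l;x_0,\l\x_0).
\]
Since $V$ depends only on $x$ and $k$ is homogeneous of degree $2$ in $\x$, a direct computation shows $(\tilde x_\l,\tilde\x_\l)$ satisfies the Hamilton system of $k$ perturbed only by the forcing $-\l^{-2}\pa_x V(\tilde x_\l)$ in the $\x$-equation, with initial data $(x_0,\x_0)$. The nontrapping bound $|\tilde x_\l(\s)|\leq C\jap{\s}$ (as in the proof of Proposition~1) makes this perturbation $O(\l^{-2}\jap{\s}^{1-\m})$ along the trajectory. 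Since $x(t;x_0,\l\x_0)=\tilde x_\l(\l t)$ and $\l^{-1}\x(t;x_0,\l\x_0)=\tilde\x_\l(\l t)$, the high-energy limit at fixed $t$ becomes the long-time behaviour of the perturbed $k$-system on $[0,\l t]$.

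Next I would relate the two modifiers. The substitution $\s=\l s$ in the definition of $\F$ together with the homogeneity of $k$ in $\x$ yields the scaling identity $\pa_\x\G(t,\l\x)=\pa_\x\G(\l t,\x)$ and
\[
\pa_\x\F(t,\l\x)=\pa_\x\G(\l t,\x)+\l^{-2}\int_0^{\l t}\s\,\pa_x V^{(L)}(\s\x)\,d\s.
\]
Combined with the already-established $k$-flow limits $y(\l t;x_0,\x_0)-\pa_\x\G(\l t,\y(\l t;x_0,\x_0))\to x_\pm$ and $\y(\l t;x_0,\x_0)\to\x_\pm$ from Proposition~1, this reduces the theorem to the two claims $\tilde\x_\l(\l t)-\y(\l t;x_0,\x_0)\to 0$ and
\[
\tilde x_\l(\l t)-y(\l t;x_0,\x_0)-\l^{-2}\int_0^{\l t}\s\,\pa_x V^{(L)}\bigpare{\s\,\y(\l t;x_0,\x_0)}\,d\s\to 0.
\]
Both would be obtained by integrating the linear ODE satisfied by $u:=\tilde x_\l-y$ and $v:=\tilde\x_\l-\y$: its homogeneous part has coefficients decaying like $\jap{\s}^{-1-\m}$ (by Assumption~A and $|y(\s)|\geq c\s-C$), and its forcing is $-\l^{-2}\pa_x V(\tilde x_\l)$. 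A Gronwall argument gives $v(\l t)=O(\l^{-\m})$, while $u$ grows at most like $O(\l^{-2}\jap{\s}^{3-\m})$, so $u(\l t)=O(\l^{1-\m})$. The short-range part contributes only $O(\l^{-\m'})$ for some $\m'>0$, since $\n>1$ together with $|y|\geq c\s-C$ ensures $\int|\pa_xV^{(S)}(y(\s))|\,d\s<\infty$.

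The main obstacle is the matching of the long-range drift: for $\m\in(1/2,1)$ the deviation $u(\l t)$ is \emph{not} small but of order $\l^{1-\m}$, and it must agree to $o(1)$ with the displayed integral evaluated along the \emph{straight} trajectory $\s\mapsto\s\,\y(\l t;x_0,\x_0)$ rather than along $y(\s;x_0,\x_0)$. To justify this replacement one compares $\pa_x V^{(L)}(\tilde x_\l(\s))$ with $\pa_x V^{(L)}(\s\,\y(\l t;x_0,\x_0))$ using the bounds $|y(\s)-\s\y(\s)|=O(\jap{\s}^{1-\m})$ and $|\y(\s)-\y(\l t)|=O(\jap{\s}^{-\m})$ from the proof of Proposition~1, the Gronwall estimate on $u$, and the Assumption~A bound $|\pa_x^2 V^{(L)}|\leq C\jap{x}^{-\m}$; after integration in $\s$ over $[0,\l t]$ these errors combine to $o(\l^{1-\m})$, giving the required cancellation. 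Locally uniform convergence with derivatives asserted in the theorem then follows by applying exactly the same estimates to the variational equations obtained by differentiating the flow and the Dollard modifier in $(x_0,\x_0)$, in direct analogy with the closing sentence of the proof of Proposition~1.
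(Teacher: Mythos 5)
Your overall strategy---rescaling to $(\tilde x_\l,\tilde\x_\l)=\exp(\s H_{p^\l})(x_0,\x_0)$ with $p^\l=k+\l^{-2}V$, comparing with the free flow, and splitting $\pa_\x\F$ into $\pa_\x\G$ plus a Dollard correction---is close in spirit to the paper's, and your preliminary estimates ($v(\l t)=O(\l^{-\m})$, $u(\l t)=O(\l^{1-\m})$, integrability of the short-range and metric contributions) are sound. The gap is in the final cancellation, which as you set it up is false. Integrating the difference system in the flat case $a_{mn}=\d_{mn}$ (the metric terms only add $o(1)$ corrections) gives exactly
\[
u(\l t)=\int_0^{\l t}v(\s)\,d\s=-\l^{-2}\int_0^{\l t}(\l t-\s)\,\pa_xV\bigpare{\tilde x_\l(\s)}\,d\s,
\]
with kernel $-(\l t-\s)$, whereas your reduction requires $u(\l t)=+\l^{-2}\int_0^{\l t}\s\,\pa_xV^{(L)}(\s\,\y(\l t))\,d\s+o(1)$. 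The two differ by $\l^{-1}t\int_0^{\l t}\pa_xV^{(L)}\,d\s\sim t^{3-\m}\l^{1-\m}$, which diverges for $\m<1$ and is a nonzero constant for $\m=1$; no comparison of $\tilde x_\l(\s)$ with $\s\,\y(\l t)$ can repair this, since those replacements only cost $o(\l^{1-\m})$. The potential $V=cx$ (which satisfies Assumption~C with $\m=1$) makes it explicit: $u(\l t)=-ct^2/2$ while the required value is $+ct^2/2$, so your second displayed claim converges to $-ct^2\neq 0$.

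The missing piece is precisely $\pa_\x\G(\l t,\x^\l(\l t))-\pa_\x\G(\l t,\y(\l t))\approx\l t\,v(\l t)=-\l^{-1}t\int_0^{\l t}\pa_xV(\tilde x_\l)\,d\s$: the modifier must be evaluated at the full momentum $\x(t;x_0,\l\x_0)=\l\,\x^\l(\l t)$, not at the free momentum $\y(t;x_0,\l\x_0)$. (The $\y$ in the displayed statement is a misprint; the paper's proof establishes the limit for $x^\l(\l t)-\pa_\x\F^\l(\l t,\x^\l(\l t))$, and only that version is used in Section~3.) Inserting this term turns the kernel $-(\l t-\s)$ into $+\s$, and the surviving error is $\l^{-2}\int_0^{\l t}\s\,\bigabs{\pa_xV^{(L)}(\tilde x_\l(\s))-\pa_xV^{(L)}(\s\,\x^\l(\l t))}\,d\s\lesssim\l^{-2}(\l t)^{3-2\m}\to 0$, which is exactly where the hypothesis $\m>1/2$ enters. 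The paper avoids this bookkeeping by differentiating $x^\l(t)-\pa_\x\F^\l(t,\x^\l(t))$ along the perturbed flow: there the force $-\l^{-2}t\,\pa_xV^{(L)}(x^\l)$ from the Hamilton equation cancels pointwise against the corresponding term in $\tfrac{d}{dt}\pa_\x\F^\l$ up to the integrable difference $\l^{-2}t\bigpare{\pa_xV^{(L)}(x^\l)-\pa_xV^{(L)}(t\x^\l)}=O(\l^{-2}\jap{t}^{2-2\m})$, after which dominated convergence finishes the proof. I recommend you reorganize your argument along those lines rather than comparing trajectory and modifier to the free flow separately.
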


\begin{proof}
The proof is similar to that of Proposition~1, but slightly more involved. 
We fix $T>0$ and consider $(x(t;x_0,\l\x_0), \x(t;x_0,\l\x_0))$ with $|t|\leq T$. 

For $\l>0$, we set
\[
x^\l(t;x_0,\x_0) = x\Bigpare{\frac{t}{\l};x_0,\l\x_0}, \quad 
\x^\l(t;x_0,\x_0) = \frac1\l\, \x\Bigpare{\frac{t}{\l};x_0,\l\x_0}.
\]
The it is easy to check 
\[
(x^\l(t;x_0,\x_0), \x^\l(t;x_0,\x_0)) = \exp(tH_{p^\l})(x_0,\x_0),
\]
where 
\[
p^\l(x,\x)= \frac{1}{\l^2} p(x,\l\x) =\frac12 \sum_{m,n=1}^d a_{mn}(x)\x_m\x_n+\frac{1}{\l^2}V(x).
\]
We can show, as well as \eqref{eq2-1}, 
\begin{equation}\label{eq2-5}
|x^\l(t;x_0,\x_0)|\geq c|t|-C, \quad |t|\leq \l T, 
\end{equation}
uniformly for $\l\geq \l_0\gg 0$ (\cite{Na2}, Proposition~2.6). The constants in the following 
proof are independent of such large $\l\geq \l_0\gg 0$. 
The proof of \eqref{eq2-5} relies on the idea that 
$\l^{-2}V(x)$ has significantly smaller effect than the kinetic energy part $k(x,\x)$ if $\l$ is sufficiently large. 
We refer \cite{Na2} for the complete proof. 

Then, as well as \eqref{eq2-2}, we have 
\begin{align*}
\biggabs{\frac{d}{dt} \x_j^\l(t)} 
&=\biggabs{\frac12 \sum_{m,n} \frac{\pa a_{mn}}{\pa x_j}(x^\l)\x^\l_m\x^\l_n 
+\frac{1}{\l^2}\frac{\pa V}{\pa x_j}(x^\l)}\\
&\leq C \jap{t}^{-1-\m} + C\l^{-2}\jap{t}^{1-\m} \\
&\leq C\jap{t}^{-1-\m}
\end{align*}
if $|t|\leq \l T$. On the other hand, by the continuity of the solution to linear ODEs with respect to the coefficients, 
we learn 
\[
\exp(tH_{p^\l})(x_0,\x_0) \to \exp(tH_k)(x_0,\x_0) \quad \text{as }\l\to\infty
\]
for each fixed $t\in\re$. The convergence holds also for the derivatives. 
Then we can apply the dominated convergence theorem to show 
\[
\x^\l(\l t) =\x_0+\int_0^{\l t} \frac{d\x^\l}{dt}(s) ds \ \to \ 
\x_0 +\int_0^{\pm\infty}\frac{d\y}{dt}(s)ds =\x_\pm
\]
as $\l\to\infty$, where $0<\pm t\leq T$. This proves the second statement of the theorem. 
We also have (as well as \eqref{eq2-3}), 
\[
|\x^\l(t)-\x_\pm|\leq C\jap{t}^{-\m}, \quad 0<\pm t \leq \l T.
\]
Then we compute 
\begin{align*}
\biggabs{\frac{d}{dt}\biggpare{y_j^\l(t)-t \x_j^\l(t)}}
&=\biggabs{\sum_m \bigpare{a_{jm}(x^\l)-\d_{jm}}\x^\l \\
&\qquad +\frac{t}{2}\sum_{m,n} \frac{\pa a_{mn}}{\pa x_j}(x^\l)\x^\l_m\x^\l_n 
+\frac{t}{\l^2}\frac{\pa V}{\pa x_j}(x^\l)}\\
&\leq C\jap{t}^{-\m} + C\l^{-2}\jap{t}^{2-\m} \\
&\leq \jap{t}^{-\m}, \qquad\text{if }|t|\leq T, 
\end{align*}
and hence 
\begin{equation}\label{eq2-6}
|x^\l(t)-t\x^\l(t)|\leq C\jap{t}^{1-\m}, \quad |t|\leq \l T.
\end{equation}
We set
\[
\F^\l(t,\x)= \int_0^t \biggpare{k(s\x,\x)+\frac{1}{\l^2} V^{(L)}(s\x)}ds
\]
so that 
\[
\frac{\pa \F^\l}{\pa \x}(\l t,\x) = \frac{\pa \F}{\pa\x}(t,\l\x).
\]
Then we have 
\[
\frac{\pa \F^\l}{\pa \x_j}(t,\x) =\int_0^t \biggpare{\frac{s}{2}\sum_{m,n} \frac{\pa a_{mn}}{\pa x_j}
(s\x)\x_m\x_n +\frac{s}{\l^2}\frac{\pa V^{(L)}}{\pa x_j}(s\x)+\sum_m a_{jm}(s\x)\x_m}ds,
\]
and then 
\begin{align*}
&\frac{d}{dt}\biggpare{\frac{\pa \F^\l}{\pa \x_j}(t,\x^\l(t))}\\
&=\frac{t}{2} \sum_{m,n} \frac{\pa a_{mn}}{\pa x_j}(t\x^\l)\x^\l_m\x^\l_n 
+ \frac{t}{\l^2}\frac{\pa V^{(L)}}{\pa x_j}(s\x^\l) +\sum_m a_{jm}(t\x^\l)\x^\l \\
&\quad +\sum_i \int_0^t \biggpare{\frac{s^2}{2} \sum_{m,n} \frac{\pa^2 a_{mn}}{\pa x_i\pa x_j}(s\x^\l)
\x^\l_m\x^\l_n + s\sum_m \frac{\pa a_{mi}}{\pa x_j}(s\x^\l)\x^\l_m \\
& \qquad  +\frac{s^2}{\l^2}\frac{\pa^2 V^{(L)}}{\pa x_i\pa x_j}(s\x^\l) 
+s \sum_m \frac{\pa a_{jm}}{\pa x_i}(s\x^\l)\x^\l_m +a_{ji}(s\x^\l)} ds \times \frac{d\x^\l_i}{dt} . 
\end{align*}
We recall
\begin{align*}
&\frac{d x^\l_j}{d t}(t) = \sum_m a_{jm}(x^\l(t))\x^\l_m(t), \\
&\frac{d\x^\l_j}{dt}(t) = -\frac12 \sum_{m,n}\frac{\pa a_{mn}}{\pa x_j}(x^\l(t))\x^\l_m(t)\x^\l_n(t)\\
&\hspace{1.8cm} -\frac{1}{\l^2}\frac{\pa V^{(L)}}{\pa x_j}(x^\l(t)) - \frac{1}{\l^2}\frac{\pa V^{(S)}}{\pa x_j}(x^\l(t)). 
\end{align*}
Combining these, we obtain as in the proof of Proposition~1, 
\begin{align*}
&\frac{d}{dt}\biggpare{x^\l_j(t)-\frac{\pa \F^\l}{\pa \x_j}(t,\x^\l(t))}\\
&\quad = -\frac{t}{2}\sum_{m,n} \biggpare{\frac{\pa a_{mn}}{\pa x_j}(x^\l)-\frac{\pa a_{mn}}{\pa x_j}(t\x^\l)}
\x^\l_m\x^\l_n 
- \frac{t}{\l^2} \biggpare{\frac{\pa V^{(L)}}{\pa x_j}(x^\l) -\frac{\pa V^{(L)}}{\pa x_j}(t\x^\l) } \\
&\qquad +\sum_m \bigpare{a_{jm}(x^\l)-a_{jm}(t\x^\l)}\x^\l_m 
+ O\Bigpare{\jap{t}^{-2\m} +\l^{-2}\jap{t}^{2-2\m}+\l^{-2}\jap{t}^{2-\n}}. 
\end{align*}
We recall \eqref{eq2-6}, and using Assumption~C, we have 
\[
\biggabs{\frac{d}{dt}\biggpare{x^\l(t)-\frac{\pa \F^\l}{\pa \x}(t,\x^\l(t))}}\leq C\jap{t}^{-2\m'}
\]
if $|t|\leq \l T$, where $\m'=\min(\m,\n/2)>1/2$. Then, again observing 
\[
\frac{d}{dt} x^\l(t) \to \frac{d}{dt} y(t), \quad 
\frac{d}{dt}\biggpare{\frac{\pa \F^\l}{\pa\x}(t,\x^\l(t))}\to 
\frac{d}{dt}\biggpare{\frac{\pa \G}{\pa \x}(t,\y(t))}
\]
as $\l\to\infty$ for each $t$, and using the dominated 
convergence theorem, we conclude
\[
x^\l(\l t)-\frac{\pa \F^\l}{\pa \x}(\l t,\x^\l(\l t))\to x_\pm
\quad \text{as }\l\to\infty,
\]
if $0<\pm t\leq T$. The first statement of the theorem follows immediately from this. 
The last claim can be proved using the standard ODE method. 
\end{proof}

Now we prepare several estimates for the next section. 

\begin{lem}
Let $T>0$. Then for any $\a\in\ze_+^d$ there is $C_\a>0$ such that 
\[
\Bigabs{\pa_\x^\a\bigpare{\F(t,\x)-\tfrac{1}{2}t|\x|^2}}\leq C_\a |t| \jap{\x}^{2-\m-|\a|}, 
\quad \x\in\re^d, |t|\leq T.
\] 
In particular, 
\[
\Bigabs{\pa_\x^\a \bigpare{\pa_\x \F(t,\x)-t\x}}\leq C_\a |t| \jap{\x}^{1-\m-|\a|}, 
\quad \x\in\re^d, |t|\leq T. 
\]
\end{lem}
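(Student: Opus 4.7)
The plan is to write the difference $\F(t,\x)-\frac12 t|\x|^2$ as an explicit integral whose integrand carries the decay supplied by Assumption~A, and then differentiate under the integral sign. From the definitions \eqref{eq-Hamiltonians} and \eqref{eq-Dollard},
\[
\F(t,\x) - \frac{1}{2}t|\x|^2 = \int_0^t\biggpare{\frac12 \sum_{m,n}\bigpare{a_{mn}(s\x)-\d_{mn}}\x_m\x_n + V^{(L)}(s\x)}ds.
\]
I would apply $\pa_\x^\a$ and distribute via the Leibniz rule: each derivative hitting $a_{mn}(s\x)$ or $V^{(L)}(s\x)$ produces a factor of $s$ through the chain rule and, by Assumption~A, gains one unit of $\jap{s\x}$-decay; each derivative hitting the factor $\x_m\x_n$ (at most two nonvanishing ones) removes one power of $\x$. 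The differentiated integrand is therefore a finite sum of terms bounded pointwise in $s$ by either $C s^{|\a|-k}\jap{s\x}^{-\m-(|\a|-k)}|\x|^{2-k}$ with $k\in\{0,1,2\}$ (metric part) or $Cs^{|\a|}\jap{s\x}^{2-\m-|\a|}$ ($V^{(L)}$ part).

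The core task is to show that after integration in $s$ from $0$ to $t$, each such term is bounded by $C_\a|t|\jap{\x}^{2-\m-|\a|}$. For $|\x|\leq 1$ the integrand is uniformly bounded in terms of $T$ and the integral is trivially $O(|t|)$, which is already $\leq C|t|\jap{\x}^{2-\m-|\a|}$. For $|\x|\geq 1$ I would change variables $\s=s|\x|$ to reduce each integral to a scalar integral on $[0,t|\x|]$ of the form $|\x|^{-N}\int_0^{t|\x|}\s^{j}\jap{\s}^{-\m-j}d\s$ for appropriate exponents $j,N$, then estimate by splitting into $\s\leq 1$ (bounded integrand) and $\s\geq 1$ (power-law decay $\s^{-\m}$); reassembling with the $|\x|$-prefactor yields the advertised dependence on both $|t|$ and $\jap{\x}$. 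The second inequality in the lemma is then immediate from the first applied with multi-index $\a+e_i$ for each coordinate direction $i$, since $\pa_{\x_i}(\frac12 t|\x|^2)=t\x_i$.

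The main obstacle is handling the $\s$-integration uniformly in $t$ and $|\x|$, particularly at the transitional regime $s|\x|\sim 1$ where the integrand switches between its ``bounded'' and its polynomial-decay behavior. The bookkeeping must be done so that the linear $|t|$-factor on the right-hand side is preserved while all $T$-dependence and tail contributions are absorbed into $C_\a$.
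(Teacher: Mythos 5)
Your approach is the same as the paper's own proof: write the difference as $\tfrac12\int_0^t\sum_{m,n}(a_{mn}(s\x)-\d_{mn})\x_m\x_n\,ds+\int_0^t V^{(L)}(s\x)\,ds$, differentiate under the integral via Leibniz, and rescale $\s=s|\x|$. But the step you defer --- ``the bookkeeping must be done so that the linear $|t|$-factor on the right-hand side is preserved'' --- is precisely where the argument breaks, and it cannot be repaired for the metric term. After rescaling, that term for $\a=0$ is $|\x|\int_0^{t|\x|}\jap{\s}^{-\m}d\s\le C|t||\x|^2\jap{t\x}^{-\m}$, and to reach $C|t|\jap{\x}^{2-\m}$ one would need $\jap{t\x}^{-\m}\le C\jap{\x}^{-\m}$, which fails whenever $|t|\lesssim|\x|^{-1}$. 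This is a defect of the statement, not of your method: take $d=1$, $V^{(L)}=0$, $a_{11}(x)=1+\jap{x}^{-\m}$ (admissible under Assumption A), and $t=|\x|^{-1}$; then the left-hand side equals $\tfrac12|\x|\int_0^1\jap{u}^{-\m}du\sim|\x|$ while the claimed bound is $\sim|\x|^{1-\m}$. More simply, $\pa_t\bigl(\F(t,\x)-\tfrac12t|\x|^2\bigr)\big|_{t=0}=\tfrac12\sum_{m,n}(a_{mn}(0)-\d_{mn})\x_m\x_n$ is generically of size $|\x|^2$, so no estimate $C|t|\jap{\x}^{2-\m}$ can hold near $t=0$. (The paper's proof has the same lacuna in its final display, where $C|t\x|\jap{t|\x|}^{-\m}|\x|^{1-|\a|}\le C|t|\jap{\x}^{2-\m-|\a|}$ is asserted; also, at $\m=1$ the inequality $\int_0^R\jap{\s}^{-1}d\s\le CR\jap{R}^{-1}$ implicitly used there fails by a logarithm.)

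The potential term does carry the linear $|t|$-factor, since $\int_0^t s^{|\a|}\jap{s\x}^{2-\m-|\a|}ds\le C|t|\jap{\x}^{-|\a|}\jap{t\x}^{2-\m}\le C_T|t|\jap{\x}^{2-\m-|\a|}$ because the exponent $2-\m$ is positive. What your computation actually yields for the metric term is $C_\a|t|\jap{\x}^{2-|\a|}\jap{t\x}^{-\m}$, hence altogether $\bigabs{\pa_\x^\a(\F(t,\x)-\tfrac12t|\x|^2)}\le C_\a\jap{\x}^{2-\m-|\a|}$ uniformly for $|t|\le T$ (with an extra $\log\jap{\x}$ when $\m=1$). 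This $t$-uniform bound is all that the rest of the paper uses (Lemmas~8 and~9 only need fixed $|t|\le T$ and decay in $\x$), so you should either prove that weaker, correct statement or record the sharp form with the factor $\jap{t\x}^{-\m}$; do not claim the linear $|t|$-dependence for the $a_{mn}$ part. Your reduction of the second displayed inequality to the first via $\a\mapsto\a+e_i$ is fine.
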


\begin{proof}
We suppose $t\geq 0$. By the definition, we have 
\[
\F(t,\x)-\tfrac{1}{2}t |\x|^2 = \frac12 \int_0^t \sum_{m,n=1}^d \bigpare{a_{mn}(s\x)-\d_{mn}} \x_m\x_n ds + \int_0^t V^{(L)}(s\x) ds, 
\]
and hence 
\begin{align*}
&\Bigabs{\pa_\x^\a\bigpare{\F(t,\x)-\tfrac12 t|\x|^2}} \\
&\leq \frac12 \int_0^t \sum_{m,n} \bigabs{\pa_\x^\a\bigbra{(a_{mn}(s\x)-\d_{mn})\x_m\x_n}}ds
+\int_0^t \bigabs{\pa_\x^\a\bigpare{V^{(L)}(s\x)}} ds \\
&\leq C \sum_{j=0}^{|\a|} \int_0^t s^j \jap{s\x}^{-\m-j }|\x|^{2-(|\a|-j)} ds 
+C\int_0^t s^{|\a|} \jap{s\x}^{2-\m-|\a|} ds\\
&=C \sum_{j=0}^{|\a|} \int_0^{t|\x|} \s^j \jap{\s}^{-\m-j} |\x|^{1-|\a|} d\s 
+C\int_0^{t|\x|} \s^{|\a|} \jap{\s}^{2-\m-|\a|}  |\x|^{-1-|\a|} d\s\\
&\leq C \int_0^{t|\x|}  \jap{\s}^{-\m}d\s\cdot  |\x|^{1-|\a|} 
+C\int_0^{t|\x|}  \jap{\s}^{2-\m} d\s \cdot |\x|^{-1-|\a|} \\
&\leq C|t\x| \jap{t|\x|}^{-\m}|\x|^{1-|\a|} + C|t\x| \jap{t|\x|}^{2-\m}|\x|^{-1-|\a|} \\
&\leq C|t| \jap{\x}^{2-\m-|\a|}, \qquad\text{if } |t|\leq T,\ |\x|\geq 1.  
\end{align*}
The case $t<0$ is handled similarly. 
\end{proof}

We then set
\[
z(t;x_0,\x_0) =x(t;x_0,\x_0)-\pa_\x\F(t;\x(t;x_0,\x_0)),
\]
and consider the time evolution:
\[
t\mapsto (z(t;x_0,\x_0),\x(t;x_0,\x_0)), \quad |t|\leq T. 
\]
We recall 
\[
x_\pm = \lim_{\l\to\infty} z(t;x_0,\l\x_0), \quad \text{when }0<\pm t\leq T, 
\]
and in particular, $\bigset{z(t;x_0,\l\x_0)}{|t|\leq T, \l\geq \l_0}$ is bounded in $\re^d$,
provided $\l_0$ is sufficiently large. We set
\[
\ell(t;z,\x)= p\bigpare{z+\pa_\x\F(t,\x),\x} -\frac{\pa\F}{\pa t}(t,\x).
\]
Then we can show that $(z(t),\x(t))$ is the Hamilton flow generated by the time-dependent 
Hamiltonian $\ell(t;z,\x)$: 

\begin{lem}
$(z(t),\x(t))=(z(t;x_0,\x_0),\x(t;x_0,\x_0))$ is the solution to 
\[
\frac{d z}{dt}(t) =\frac{\pa\ell}{\pa\x}(t;z(t),\x(t)), \quad
\frac{d\x}{dt}=-\frac{\pa\ell}{\pa z}(t;z(t),\x(t)), 
\]
with the initial condition: $z(0)=x_0$, $\x(0)=\x_0$. 
\end{lem}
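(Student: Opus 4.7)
The plan is to differentiate $z(t)=x(t)-\pa_\x\F(t,\x(t))$ and $\x(t)$ with respect to $t$, using the Hamilton equations for $p$, the chain rule, and the definition of $\ell$. Since $(x(t),\x(t))$ satisfies $\dot x=\pa_\x p(x,\x)$, $\dot\x=-\pa_x p(x,\x)$, and since $x(t)=z(t)+\pa_\x\F(t,\x(t))$ by construction, every appearance of $p$ evaluated at $(x,\x)$ can be rewritten as $p$ evaluated at $(z+\pa_\x\F(t,\x),\x)$, which is exactly where the partial derivatives of $\ell$ in the variables $(z,\x)$ pick it up.

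First I would check the equation for $\x$: computing
\[
\frac{\pa\ell}{\pa z}(t;z,\x)=\frac{\pa p}{\pa x}\bigpare{z+\pa_\x\F(t,\x),\x},
\]
and substituting $(z,\x)=(z(t),\x(t))$ gives $\pa_x p(x(t),\x(t))=-\dot\x(t)$, so $\dot\x=-\pa_z\ell$ is immediate.

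Next I would treat the equation for $z$. By the chain rule,
\[
\dot z(t)=\dot x(t)-\pa_t\pa_\x\F(t,\x(t))-\pa_\x^2\F(t,\x(t))\,\dot\x(t).
\]
On the other hand, differentiating $\ell(t;z,\x)=p(z+\pa_\x\F(t,\x),\x)-\pa_t\F(t,\x)$ in $\x$ gives
\[
\frac{\pa\ell}{\pa\x}(t;z,\x)=\pa_\x p\bigpare{z+\pa_\x\F,\x}+\pa_x p\bigpare{z+\pa_\x\F,\x}\cdot\pa_\x^2\F-\pa_\x\pa_t\F,
\]
and substituting $(z,\x)=(z(t),\x(t))$ turns the first term into $\pa_\x p(x,\x)=\dot x$ and the second into $-\pa_\x^2\F\cdot\dot\x$. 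Comparing the two expressions gives $\dot z=\pa_\ell/\pa\x$. The initial condition $z(0)=x_0$ follows from $\F(0,\x)\equiv 0$, hence $\pa_\x\F(0,\x)=0$.

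There is really no obstacle here: this is a bookkeeping computation in which the term $\pa_t\F(t,\x)$ appearing in $\ell$ plays no role in either Hamilton equation (its $\x$-derivative cancels against the $\pa_t\pa_\x\F$ coming from the chain rule, and it is independent of $z$). The only mild care needed is to keep track of which argument $p$, $\pa_x p$, $\pa_\x p$ is evaluated at, and to use the identity $x(t)=z(t)+\pa_\x\F(t,\x(t))$ consistently.
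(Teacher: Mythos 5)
Your proposal is correct and follows essentially the same route as the paper's own proof: verify the $\x$-equation directly from $\pa_z\ell=\pa_x p(z+\pa_\x\F,\x)$, then compute $\dot z$ by the chain rule and match it term by term against $\pa_\x\ell$ using the Hamilton equations for $(x(t),\x(t))$ and the identity $x(t)=z(t)+\pa_\x\F(t,\x(t))$. The bookkeeping, including the cancellation of the $\pa_t\pa_\x\F$ terms and the initial condition from $\F(0,\cdot)=0$, matches the paper's argument.
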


\begin{proof}
The second equation and the initial conditions are easy to confirm. 
By the definitions, we have 
\[
\frac{\pa\ell}{\pa\x_j}(t;z,\x) =\frac{\pa p}{\pa\x_j}\bigpare{z+\pa_\x\F,\x}
+\sum_{m=1}^d \frac{\pa^2\F}{\pa\x_j\pa\x_m}\frac{\pa p}{\pa x_m}\bigpare{z+\pa_\x\F,\x}
- \frac{\pa^2\F}{\pa\x_j\pa t},
\]
and 
\[
\frac{d z_j}{dt}(t)=  \frac{dx_j}{dt}
-\frac{\pa^2\F}{\pa\x_j\pa t} -\sum_{m=1}^d \frac{\pa^2\F}{\pa \x_j\pa\x_m}\frac{d\x_m}{dt}. 
\]
On the other hand, by the Hamilton equation for $(x(t),\x(t))$, we have 
\[
\frac{dx_j}{dt} = \frac{\pa p}{\pa\x_j}\bigpare{z+\pa_\x\F,\x}, \quad 
\frac{d\x_m}{dt}= -\frac{\pa p}{\pa x_m}\bigpare{z+\pa_\x\F,\x}, 
\]
and we conclude the first equation of the lemma combining them. 
\end{proof}

By the definition, we easily see 
\[
\frac{\pa\F}{\pa t}(t,\x)= p^{(L)}(t\x,\x), 
\]
and hence we can write
\begin{align*}
\ell(t;z,\x) &= p\bigpare{z+\pa_\x\F(t,\x),\x}-p^{(L)}(t\x,\x) \\
&= \frac12 \sum_{m,n=1}^d \bigpare{a_{mn}\bigpare{z+\pa_\x\F(t,\x)}-a_{mn}(t\x)}\x_m\x_n \\
&\quad+ \bigpare{V^{(L)}\bigpare{z+\pa_\x\F(t,\x)}-V^{(L)}(t\x)} 
+V^{(S)}\bigpare{z+\pa_\x\F(t,\x)}. 
\end{align*}
Combining this with Lemma~7, we obtain: 

\begin{lem}
Let $K\subset \re^d$ be a bounded domain, and $\a,\b\in\ze_+^d$. Then there is $C_{K\a\b}>0$ such that 
\[
\Bigabs{\pa_z^\a\pa_\x^\b \ell(t;z,\x)}\leq C_{K\a\b}\jap{\x}^{1-\c-|\b|}, 
\quad z\in K, \x\in\re^d, |t|\leq T, 
\]
where $\c=\min(2\m-1,\n-1)>0$. 
\end{lem}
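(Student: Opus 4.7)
The plan is to estimate each of the three summands in the explicit decomposition of $\ell(t;z,\x)$ displayed immediately before the lemma, namely $I_1=\tfrac12\sum_{m,n}(a_{mn}(z+\pa_\x\F)-a_{mn}(t\x))\x_m\x_n$, $I_2=V^{(L)}(z+\pa_\x\F)-V^{(L)}(t\x)$ and $I_3=V^{(S)}(z+\pa_\x\F)$, and then combine. For $I_1$ and $I_2$ the first step is to rewrite each difference by the fundamental theorem of calculus as
\[
a_{mn}(z+\pa_\x\F(t,\x))-a_{mn}(t\x) = \sum_j (z_j+\pa_{\x_j}\F-t\x_j)\int_0^1 \pa_{x_j}a_{mn}(w(s;t,z,\x))\,ds,
\]
with $w(s;t,z,\x)=t\x+s(z+\pa_\x\F(t,\x)-t\x)$, and analogously for $V^{(L)}$. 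Lemma~7 with $|\a|=0$ gives $|\pa_\x\F(t,\x)-t\x|\leq C|t|\jap{\x}^{1-\m}$, so for $z\in K$, $|t|\leq T$ and $|\x|$ above a suitable threshold one verifies $\jap{w(s;t,z,\x)}\geq c\jap{\x}$ uniformly in $s\in[0,1]$, while on the complementary bounded region of $\x$ the estimate is trivial by smoothness.

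The second step is to apply $\pa_z^\a\pa_\x^\b$ via the Leibniz rule on the product and Faa~di~Bruno / the chain rule on the composition $\pa_{x_j}a_{mn}(w)$, which produces sums of derivatives of $a_{mn}$ (and likewise $V^{(L)}$) at $w$ contracted with derivatives of $\pa_\x\F$ in $\x$. The ingredients to feed in are Assumption~A, giving $|\pa_x^\d a_{mn}(w)|\leq C\jap{w}^{-\m-|\d|}$ for $|\d|\geq 1$ and $|\pa_x^\d V^{(L)}(w)|\leq C\jap{w}^{2-\m-|\d|}$; Assumption~C, giving $|\pa_x^\d V^{(S)}(w)|\leq C\jap{w}^{2-\n-|\d|}$ with $\n>1$; and Lemma~7, giving $|\pa_\x^\c(\pa_\x\F-t\x)|\leq C|t|\jap{\x}^{1-\m-|\c|}$ for every multi-index $\c$.

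Putting these together, the undifferentiated $I_1$ is dominated by $\jap{\x}^{-\m-1}\cdot\jap{\x}^{1-\m}\cdot|\x|^2=\jap{\x}^{2-2\m}$, and each extra $\x$-derivative costs one power of $\jap{\x}^{-1}$ regardless of whether it falls on the argument $w$, on the displacement factor $z+\pa_\x\F-t\x$, or on the monomial $\x_m\x_n$; this gives $|\pa_z^\a\pa_\x^\b I_1|\leq C\jap{\x}^{2-2\m-|\b|}$. The same mechanism yields $|\pa_z^\a\pa_\x^\b I_2|\leq C\jap{\x}^{2-2\m-|\b|}$, while estimating $I_3$ directly via Assumption~C gives $|\pa_z^\a\pa_\x^\b I_3|\leq C\jap{\x}^{2-\n-|\b|}$. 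Taking the worst exponent leaves $C\jap{\x}^{\max(2-2\m,\,2-\n)-|\b|}=C\jap{\x}^{1-\c-|\b|}$ with $\c=\min(2\m-1,\n-1)>0$, as required.

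The main obstacle I expect is the combinatorial bookkeeping in the Faa~di~Bruno expansion of $a_{mn}(w(s;t,z,\x))$: each multi-derivative in $\x$ produces many terms mixing high-order derivatives of $a_{mn}$ at $w$ with products of lower-order derivatives of $w$ (hence of $\pa_\x\F$) in $\x$, and one must verify term-by-term that every such term obeys the same scaling $\jap{\x}^{-\m-1-|\b|}$ before multiplication by $\x_m\x_n$ and the displacement factor. A related technical point is securing $\jap{w}\gtrsim\jap{\x}$ uniformly for $(s,t,z)\in[0,1]\times[-T,T]\times K$ on the unbounded part of $\x$-space, since this is the step that converts the decay rates of the coefficients at $w$ into decay rates in $\x$.
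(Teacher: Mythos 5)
Your decomposition of $\ell$ into the three terms $I_1,I_2,I_3$, the rewriting of the two differences by the fundamental theorem of calculus along $w(s)=t\x+s(z+\pa_\x\F(t,\x)-t\x)$, and the subsequent bookkeeping with Assumptions~A, C and Lemma~7 is exactly the route the paper intends (the paper itself offers nothing beyond ``combining this with Lemma~7''), and your exponent count is right: displacement $O(\jap{\x}^{1-\m})$ times $\jap{\x}^{-1-\m}$ for $\nabla a_{mn}$ (resp.\ $\jap{\x}^{1-\m}$ for $\nabla V^{(L)}$), each $\x$-derivative gaining one power, yielding $\jap{\x}^{2-2\m-|\b|}$ and $\jap{\x}^{2-\n-|\b|}$, i.e.\ $\jap{\x}^{1-\c-|\b|}$.

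The one step that does not hold as you state it is the lower bound $\jap{w(s;t,z,\x)}\geq c\jap{\x}$ ``uniformly for $(s,t,z)\in[0,1]\times[-T,T]\times K$'': at $t=0$ one has $w(s)=sz$, which stays in a bounded set while $|\x|\to\infty$, and more generally for $|t|\lesssim \jap{\x}^{-1}$ the whole curve $w$ remains bounded. Since this is precisely the step that converts decay of $\pa_x a_{mn}$ and $\pa_x V^{(L)}$ at $w$ into decay in $\x$, your constants degenerate as $t\to 0$. What your argument does prove is the estimate for $t_0\leq |t|\leq T$ with constants depending on $t_0$: there $|w(s)|\geq |t||\x|-C_K-C|t|\jap{\x}^{1-\m}\geq \tfrac{t_0}{2}|\x|$ for $|\x|$ large, and the rest goes through. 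The failure at $t=0$ is, however, inherited from the statement rather than introduced by you: since $\F(0,\x)=0$ and $\pa_\x\F(0,\x)=0$,
\[
\ell(0;z,\x)=\frac12\sum_{m,n}\bigpare{a_{mn}(z)-a_{mn}(0)}\x_m\x_n+V(z)-V^{(L)}(0),
\]
which is genuinely quadratic in $\x$ unless $a_{mn}$ is constant on $K$, so no bound $C\jap{\x}^{1-\c}$ can hold uniformly down to $t=0$. You should therefore either restrict to $|t|$ bounded away from zero (which is how the lemma is used for fixed $t\neq 0$) or keep the $t$-dependence explicit by working with $\jap{t\x}$ in the intermediate bounds; as written, the claimed uniformity over all $|t|\leq T$ cannot be established because it is false.
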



\section{Proof of the main theorem}

The proof of Theorem~2 is analogous to the proof of \cite{Na2} Theorem~1.2, given the estimates 
on the classical flow in Section~2. We sketch it for the completeness, and give somewhat formal proof. 

For a symbol $a\in S^m_{1,0}$, we quantize it using the Weyl calculus (\cite{Ho} Section~18.5): 
\[
a^W\!(x,D_x)u(x) =(2\pi)^{-d} \iint e^{i(x-y)\cdot\x} a\Bigpare{\frac{x+y}{2},\x} u(y) dyd\x, 
\quad u\in\mathcal{S}(\re^d). 
\]
By direct computations, it is easy to see 
\begin{align*}
p^W\!(x,D_x) &= -\frac18 \sum_{m,n} \biggpare{\frac{\pa^2}{\pa x_m\pa x_n}a_{mn}(x) 
+2\frac{\pa}{\pa x_m} a_{mn}(x)\frac{\pa}{\pa x_n} \\
&\hspace{5cm} +a_{mn}(x)\frac{\pa^2}{\pa x_m\pa x_n}} +V(x)\\
&= H-\frac18 \sum_{m,n} \frac{\pa^2 a_{mn}}{\pa x_m\pa x_n}(x), 
\end{align*}
where $p(x,\x)= \frac12 \sum a_{mn}(x)\x_m\x_n+V(x)$. Hence, by replacing $V$ in $p(x,\x)$ by 
$V+\frac18\sum\frac{ \pa^2 a_{mn}}{\pa x_m \pa x_n} (x)$, we may consider 
$H=p^W\!(x, D_x)$. 

We are interested in the behavior of 
\[
v(t)= e^{i\F(t,D_x)} e^{-itH} u_0, \quad t\in \re.
\]
For $u_0\in\mathcal{S}(\re^d)$, we can differentiate $v(t)$ in $t$, and we have 
\begin{align*}
\frac{d}{dt} v(t) &= i e^{i\F(t,D_x)} \biggpare{\frac{\pa \F}{\pa t}(t,D_x)-H} e^{-itH}u_0\\
&= -i L(t) v(t), 
\end{align*}
where 
\[
L(t) =e^{i\F(t,D_x)} H e^{-i\F(t,D_x)} -\frac{\pa \F}{\pa t}(t,D_x).
\]
We note 
\begin{align*}
e^{i\F(t,D_x)} x e^{-i\F(t,D_x)} 
&= \mathcal{F}^* \Bigbrac{e^{i\F(t,\x)} i\pa_\x  e^{-i\F(t,\x)} }\mathcal{F}\\
&=\mathcal{F}^* \Bigbrac{i\pa_\x +\pa_\x\F(t,\x)}\mathcal{F} 
= x+\pa_\x\F(t,D_x), 
\end{align*}
where $\mathcal{F}$ is the Fourier transform. Hence, we expect
\[
e^{i\F(t,D_x)} p^W\!(x,D_x) e^{-i\F(t,D_x)} \sim p^W\!\bigpare{x+\pa_\x\F(t,D_x),D_x}
\]
in some sense. In fact, combining Lemma~7 with \cite{Na2} Lemma~3.1, we have the following: 

\begin{lem}
$e^{i\F(t,D_x)} H e^{-i\F(t,D_x)}$ is a pseudodifferential operator with the symbol in $S^2_{1,0}$. 
Moreover, if we set $\tilde p(t,x,\x) =p(x+\pa_\x\F(t,\x),\x)$, then 
\[
e^{i\F(t,D_x)} H e^{-i\F(t,D_x)} -\tilde p^W\!(t,x,D_x) =r^W\!(t,x,D_x)
\]
with $r\in S^0_{1,0}$, i.e., for any $\a,\b\in\ze_+^d$, $K\Subset\re^d$, there is $C_{\a\b K}>0$ 
such that 
\[
\Bigabs{\pa_x^\a\pa_\x^\b r(t,x,\x)}\leq C_{\a\b K}\jap{\x}^{-|\b|}, \quad x\in K, \x\in\re^d, |t|\leq T.
\]
\end{lem}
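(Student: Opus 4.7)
The plan is to apply Lemma 3.1 of \cite{Na2}, an Egorov-type composition formula for conjugation of a Weyl quantized operator by a Fourier multiplier $e^{i\phi(D_x)}$: given suitable symbol estimates on $\phi$, the operator $e^{i\phi(D_x)} a^W(x,D_x) e^{-i\phi(D_x)}$ is pseudodifferential with leading Weyl symbol $a(x+\pa_\x\phi(\x),\x)$, the subleading terms arising from the non-commutativity of the Weyl calculus. Specializing $\phi=\F(t,\cdot)$ and $a=p$, and absorbing the constant-in-$\x$ subprincipal term $-\tfrac{1}{8}\sum\pa_m\pa_n a_{mn}$ that relates $H$ to $p^W$ into $V$, will produce the claimed representation with principal Weyl symbol $\tilde p(t,x,\x)$.

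The hypotheses needed for \cite{Na2} Lemma 3.1 to apply to $\phi=\F(t,\cdot)$ are exactly what Lemma 7 provides. Writing $\F(t,\x)=\tfrac{1}{2}t|\x|^2+\tilde\F(t,\x)$, Lemma 7 yields
\[
|\pa_\x^\a\tilde\F(t,\x)|\leq C_\a|t|\jap{\x}^{2-\m-|\a|},\qquad |t|\leq T,
\]
so $\pa_\x\F=t\x+O(\jap{\x}^{1-\m})$ and $\pa_\x^\a\F$ is bounded (and in fact decaying) for $|\a|\geq 2$. The quadratic piece $\tfrac{t}{2}|\x|^2$ corresponds to the free Schr\"odinger evolution, for which the identity $e^{it|D_x|^2/2}xe^{-it|D_x|^2/2}=x+tD_x$ is exact, while $\tilde\F$ is a subprincipal symbol mild enough for the composition formula to apply with full asymptotic expansion.

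To pin down the remainder one expands the Weyl composition in its asymptotic series: each subleading order brings an extra pair of derivatives, distributed among $\tilde\F$ (in $\x$) and $p$ (in $x$). By Lemma 7 the $\x$-derivatives of $\tilde\F$ are controlled as above, while $\pa_x^\b p$ with $|\b|\geq 1$ stays bounded for $x$ in compact sets and grows at most like $\jap{\x}^2$ in $\x$. Tallying the powers of $\jap{\x}$, successive corrections lie in $S^{2-j\m}_{1,0}$ for increasing $j$, so after finitely many terms one is in $S^0_{1,0}$ provided $\m>1/2$. Asymptotic summation then produces the claimed $r\in S^0_{1,0}$ locally in $x$. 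The local-in-$x$ restriction is unavoidable because $\tilde p$ evaluates $p$ at the shifted point $x+\pa_\x\F(t,\x)$, which escapes to infinity in $\x$, so symbol bounds close only after restricting $x$ to compact sets.

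The main technical obstacle is the tension between the linear growth of the shift $\pa_\x\F\sim t\x$ and the unbounded $x$-dependence of $p$ through $V$, which forces one to work locally in $x$ and to Taylor-expand $p$ at the shifted point using the size estimate $\pa_\x\F-t\x=O(\jap{\x}^{1-\m})$ from Lemma 7. The assumption $\m>1/2$ is precisely the quantitative threshold making the $\jap{\x}$-loss in the subleading corrections strictly smaller than $\jap{\x}^2$, so that the remainder lands in $S^0_{1,0}$ rather than a larger class. Beyond this quantitative accounting the rest of the argument is routine symbol-calculus bookkeeping, essentially parallel to \cite{Na2}, with Lemma 7 supplying the simpler input derivative estimates made available by the Dollard-type phase.
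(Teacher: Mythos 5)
Your proposal follows exactly the route the paper takes: the paper's entire proof of this lemma is the one-line observation that it follows by combining Lemma~7 (the symbol estimates on $\F(t,\x)-\tfrac12 t|\x|^2$) with Lemma~3.1 of \cite{Na2}, which is precisely the conjugation-by-a-Fourier-multiplier calculus you invoke, with the same absorption of $-\tfrac18\sum\pa_m\pa_n a_{mn}$ into $V$ and the same local-in-$x$ caveat. The only slightly loose point is the crude counting ``corrections in $S^{2-j\m}_{1,0}$,'' which by itself would not place the first few corrections in $S^0_{1,0}$ for $\m$ close to $1/2$; one also needs, as you indicate in passing, the decay of $\pa_x^\c p$ evaluated at the shifted point $x+\pa_\x\F(t,\x)\sim t\x$ together with the factor $|t|$ in Lemma~7, and this bookkeeping is exactly what the cited lemma of \cite{Na2} supplies.
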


Thus, by recalling the definition of $\ell(t;x,\x)$ in Section~2, we learn that the principal symbol 
of $L(t)$ is given by $\ell(t;x,\x)$. This is consistent with the fact that $e^{i\F(t,D_x)} e^{-itH}$ 
is the quantization of the classical flow: $(x_0,\x_0)\mapsto (z(t),\x(t))$, of which $\ell(t;x,\x)$ 
is the Hamiltonian. 

In order to analyze the microlocal singularities, we use the semiclassical type characterization 
of the wave front set: Let $(x_0,\x_0)\in\re^d\times\re^d$, $\x_0\neq 0$, and $u\in\mathcal{S}'(\re^d)$. 
$(x_0,\x_0)\notin \WF(u)$ if and only if there is $a\in C_0^\infty(\re^d\times \re^d)$ such that 
$a(x_0,\x_0)\neq 0$ and 
\begin{equation}\label{eq3-1}
\bignorm{a^W\!(x,\l^{-1} D_x) u}\leq C_N \l^{-N}, \quad \l\gg 0, 
\end{equation}
with any $N\in\ze_+$ (see, e.g., \cite{Ma} Section~2.9). 

Let $(x_0,\x_0)\in\re^d\times\re^d$, $\x_0\neq 0$, be fixed, and suppose 
$a_0\in C_0^\infty(\re^d\times\re^d)$ is supported in a small neighborhood of $(x_0,\x_0)$, 
for example, $B_\d(x_0,\x_0)=\bigset{(x,\x)}{|x-x_0|^2+|\x-\x_0|^2<\d^2}$. We set
\[
A(t)= e^{i\F(t,D_x)} e^{-itH} A_0\,  e^{itH} e^{-i\F(t,D_x)}, 
\quad A(0)= A_0= a_0^W\!(x,\l^{-1} D_x), 
\]
and consider the time evolution of $A(t)$. In the weak sense on $\mathcal{S}(\re^d)$, we can 
compute the derivative of $A(t)$ in $t$, and we obtain the Heisenberg equation: 
\begin{equation}\label{eq3-2}
\frac{d}{dt} A(t) = -i[L(t), A(t)], \quad A(0)= a_0^W\!(x, \l^{-1} D_x).
\end{equation}
We construct an asymptotic solution to this equation as $\l\to\infty$, following the 
Egorov type argument. The corresponding transport equation is given by 
\[
\frac{\pa}{\pa t}a(t,x,\x)= -\{\ell, a\}(t,x,\x) 
=-\sum_{m=1}^d \biggpare{\frac{\pa\ell}{\pa\x_m}\frac{\pa a}{\pa x_m}-
\frac{\pa\ell}{\pa x_m}\frac{\pa a}{\pa \x_m}}
\]
with the initial condition $a(0,x,\x)= a_0(x,\l^{-1}\x)$. We denote 
\[
\S_t: (x_0,\x_0)\mapsto (z(t),\x(t)), \quad a_0^\l(x,\x)=a_0(x,\l^{-1}\x).
\]
Then the solution to the transport equation is given by 
\[
\tilde a_0(t;x,\x) =(a_0^\l\circ \S_t^{-1})(x,\x), 
\]
and $\tilde a_0(t,\cdot,\cdot)$ is supported in $\S_t(\supp(a_0^\l))$. 
We note $a_0^\l$ is bounded in $S^0_{1,0}$, uniformly in $\l\in [1,\infty)$. This also 
implies $\tilde a_0(t,\cdot,\cdot)$ is uniformly bounded in $S^0_{1,0}$, provided $|t|\leq T$. 
Combining this observation with Lemma~9, we learn that 
\[
-i[L(t),A_0(t)] +\{\ell,\tilde a_0\}^W\!(x,D_x) =r_0^W\!(t;x,D_x)
\]
with $r_0\in S^{-1}_{1,0}$ uniformly in $\l$. Moreover, by the asymptotic expansion, 
we learn $r_0(t;\cdot,\cdot)$ is supported in $\S_t(\supp(a_0^\l))$ modulo $O(\l^{-\infty})$ terms. 

Following the standard Egorov type argument with the help of the scaling argument in Section~2, we can 
construct an asymptotic solution $\tilde a(t;x,\x)$ with the following properties (see \cite{Na2} 
Propostion~3.2): 
\begin{enumerate}
\renewcommand{\theenumi}{\roman{enumi}}
\renewcommand{\labelenumi}{\theenumi )}
\item $\tilde a(0;x,\x)=a_0^\l(x,\x)=a_0(x,\l^{-1}\x)$. 
\item $\tilde a(t;\cdot,\cdot)$ is supported in  $\S_t(\supp(a_0^\l))$. 
\item For any $\a,\b\in\ze_+^d$, there is $C_{\a\b}>0$ such that 
\[
\bigabs{\pa_x^\a\pa_\x^\b \tilde a(t;x,\x)}\leq C_{\a\b}\l^{-|\b|}, 
\quad |t|\leq T, x,\x\in\re^d, \l\geq 1. 
\]
\item The principal symbol of $\tilde a(t;x,\x)$ is given by $a_0^\l\circ \S_t^{-1}$, 
i.e., 
\[
\bigabs{\pa_x^\a\pa_\x^\b\bigpare{\tilde a(t;x,\x)-(a_0^\l\circ \S_t^{-1})(x,\x)}}
\leq C_{\a\b}\l^{-1-|\b|}
\]
for $|t|\leq T$, $x,\x\in\re^d$, $\l\geq 1$. 
\item $\tilde A(t)=\tilde a^W\!(t;x,D_x)$ satisfies the Heisenberg equation \eqref{eq3-2} asymptotically, 
i.e, 
\[
\biggnorm{\frac{d}{dt}\tilde A(t) +i\Bigbrac{L(t),\tilde A(t)}}\leq C_N \l^{-N}, \quad\l\geq 1, 
\]
for any $N\in\ze_+$ with some $C_N>0$. 
\end{enumerate}

These properties imply 
\[
\biggnorm{\frac{d}{dt} \Bigpare{ e^{itH} e^{-i\F(t,D_x)}\tilde A(t) e^{i\F(t,D_x)} e^{-itH}}}
\leq C_N \l^{-N}, \quad |t|\leq T, 
\]
and hence 
\begin{equation}\label{eq3-3}
\Bignorm{ e^{itH} e^{-i\F(t,D_x)}\tilde A(t) e^{i\F(t,D_x)} e^{-itH}-a_0^W\!(x,\l^{-1}D_x)} \leq C_N \l^{-N}
\end{equation}
if $|t|\leq T$. This is equivalent to 
\begin{equation}\label{eq3-4}
\Bignorm{\tilde A(t)- e^{i\F(t,D_x)} e^{-itH} a_0^W\!(x,\l^{-1}D_x) e^{itH} e^{-i\F(t,D_x)}} 
\leq C_N\l^{-N}. 
\end{equation} 

On the other hand, if we write 
\begin{align*}
\S_t^\l(x,\x)&= (z(\l^{-1}t;x,\l\x),\l^{-1}\x(\l^{-1}t;x,\l\x) ) \\
&= \bigpare{x^\l(t;x,\x)-\pa_\x \F^\l(t,\x^\l(t;x,\x)),\x^\l(t;x,\x)},
\end{align*}
then 
\[
\bigpare{a_0^\l\circ \S_t^{-1}}(x,\l\x) =\bigpare{a_0\circ (\S_{\l t}^\l)^{-1}}(x,\x).
\]
We note $(\S_{\l t}^\l)^{-1}$ converges to $W_\pm^{cl}$ as $\l\to\infty$ when $\pm t>0$ locally uniformly, 
with its derivatives (Theorem~6). Hence we learn that $\bigpare{a_0^\l\circ \S_t^{-1}}(x,\l\x)$ converges to 
$\bigpare{a_0\circ W_\pm^{cl}}(x,\x)$ uniformly in $(x,\x)$ with its derivatives, 
and that the support of $(a_0^\l\circ \S_t^{-1})(x,\l\x)$ also converges to the support of $a_0\circ W_\pm^{cl}$. 
This implies $(a_0^\l\circ \S_t^{-1})^W\!(x,D_x)$ converges to $(a_0\circ W_\pm^{cl})^W\!(x,\l^{-1} D_x)$ as 
$\l\to\infty$ including their microlocal support properties. 

If $(x_0,\x_0)\notin \WF(u_0)$, and $a_0$ is supported in a small neighborhood of $(x_0,\x_0)$ 
such that \eqref{eq3-1} holds, then \eqref{eq3-3} implies 
\[
\bignorm{ \tilde A(t) e^{i\F(t;D_x)}e^{-itH}u_0} \leq C_N \l^{-N}.
\]
Since the principal symbol of $\tilde A(t)$ is $(a_0^\l\circ \S_t^{-1})(x,\x)$, which is very close to 
$(a_0\circ W_\pm^{cl})(x,\l\x)$, this implies $(W_\pm^{cl})^{-1}(x_0,\x_0)\notin \WF\bigpare{e^{i\F(t;D_x)}e^{-itH}u_0}$. 

Similarly, if $(W_\pm^{cl})^{-1}(x_0,\x_0)\notin \WF\bigpare{e^{i\F(t;D_x)}e^{-itH}u_0}$, then we can conclude
$(x_0,\x_0)\notin \WF(u_0)$ using \eqref{eq3-4}. 

The above formal argument can be easily justified as in \cite{Na2} Section~3.2, and 
Theorem~2 is proved. \qed 

\section{Asymptotically homogeneous potentials} 

Here we consider the case $a_{mn}(x)=\d_{mn}$, and $V(x)$ is asymptotically homogeneous of 
order $\b\in [1,3/2)$. 

\begin{proof}[Proof of Theorem~4]
Suppose $V^{(L)}(x)=|x|V^{(L)}(\hat x)$, $\hat x=x/|x|$, if $|x| \geq 1$, and let $t>0$. Then if $|\x|\geq t^{-1}$, 
we have 
\begin{align*}
\int_0^t V^{(L)}(s\x)ds 
&= \int_0^t s|\x|V^{(L)}(\hat\x)ds +\int_0^{1/|\x|} \bigpare{V^{(L)}(s\x)-s|\x| V^{(L)}(\hat \x)}ds\\
&= \frac{t^2}{2}|\x|V^{(L)}(\hat\x)+R(t,\x). 
\end{align*}
$R(t,\x)$ can be computed as 
\[
R(t,\x)=\int_0^1 \bigpare{V^{(L)}(\s\hat\x)-\s V^{(L)}(\hat\x)}|\x|^{-1}d\s, 
\]
and hence for any $\a\in\ze_+^d$, 
\[
\bigabs{\pa_\x^\a R(t,\x)}\leq C_\a |\x|^{-1-|\a|}, \quad |\x|\geq t^{-1}. 
\]
Hence, if we set 
\[
F(t,\x)=\int_0^t V^{(L)}(s\x)ds -\frac{t^2}{2} V^{(L)}(\x), 
\]
then for any fixed $t\neq 0$, $F(t,\x)\in S^{-1}_{1,0}$, i.e., for any $\a\in\ze_+^d$, 
\[
\bigabs{\pa_\x^\a F(t,\x)}\leq C_\a \jap{\x}^{-1-|\a|}, \quad \x\in\re^d. 
\]
This implies $e^{iF(t,\x)}\in S^0_{1,0}$, and it is obviously elliptic. In particular, we have 
\[
\WF(e^{iF(t,D_x)}u)=\WF(u), \quad u\in L^2(\re^d). 
\]
Now we note 
\[
\F(t,\x)=\frac{t}{2}|\x|^2 +\int_0^t V^{(L)}(s\x)ds =\frac{t}{2}|\x|^2 +\frac{t^2}{2}V^{(L)}(\x) +F(t,\x). 
\]
Combining these with Theorem~2, we learn 
\begin{align*}
\WF(u)&= \WF\bigpare{e^{i\F(t,D_x)}e^{-itH}u} \\
&= \WF\bigpare{e^{iF(t,D_x)} e^{i(t^2/2)V^{(L)}(D_x)} e^{itH_0} e^{-itH}u} \\
&= \WF\bigpare{e^{i(t^2/2)V^{(L)}(D_x)} e^{itH_0} e^{-itH}u}.
\end{align*}
Since $V^{(L)}(\x)$ is homogeneous of order 1,  $e^{i(t^2/2)V^{(L)}(D_x)}$ is a Fourier integral operator 
with the associated canonical transform: 
\[
S_{t^2/2}^+\ :\ (x,\x)\to \Bigpare{x+\frac{t^2}{2}\pa_x V^{(L)}(\hat\x),\x}
\]
and hence 
\begin{equation}\label{eq4-1}
\WF\bigpare{e^{i(t^2/2)V^{(L)}(D_x)}v } =S^+_{t^2/2}(\WF(v))
\end{equation}
(see, e.g., \cite{Ho} Chapter XXV, \cite{Ta} Section~VIII.5). 
Thus we have 
\[
\WF(u)= S_{t^2/2}^+\bigpare{\WF\bigpare{e^{itH_0} e^{-itH} u}},
\]
and it is equivalent to 
\[
\WF\bigpare{e^{itH_0} e^{-itH} u}= S^+_{(-t^2/2)}(\WF(u)).
\]
If $t<0$, then 
\[
\int_0^t V^{(L)}(s\x)ds =-\int_0^{|t|} V^{(L)}(-s\x)ds, 
\]
and we replace $V^{(L)}(\hat\x)$ by $V^{(L)}(-\hat\x)$, and we change the direction of the shift
of obtain $S_{t^2/2}^-$ in the statement. 
\end{proof}

\begin{rem}
The propery \eqref{eq4-1} can also be proved using the propagation of singularities theorem for hyperbolic equations. 
In fact, $e^{i\s V^{(L)}(D_x)}u_0$ is the solution to the hyperbolic evolution equation: 
\[
\frac{\pa}{\pa \s} u(\s) =iV^{(L)}(D_x) u(\s), \quad u(0)=u_0, 
\]
and the claim \eqref{eq4-1} follows from, for example, the Egorov theorem (see, e.g., \cite{Ta} Section~VIII.2). 
\end{rem}

\begin{proof}[Proof of Theorem 5]
We suppose $t>0$, and $V^{(L)}(x)= |x|^\b V^{(L)}(\hat\x)$ for $|x|\geq 1$ with $1<\b<3/2$. 
By the same computation as in the proof of Theorem~4, we have 
\[
\int_0^t V^{(L)}(s\x) ds =\frac{t^{1+\b}}{1+\b} |\x|^\b V^{(L)}(\hat\x) +R(t,\x)
\]
with 
\[
\bigabs{\pa_\x^\a R(t,\x)}\leq C_\a |\x|^{-1-|\a|}, \quad |\x|\geq t^{-1}
\]
for any $\a\in\ze_+^d$. Thus we we learn, as well: 
\[
\F(t,\x) = \frac{t}{2}|\x|^2 +\frac{t^{1+\b}}{1+\b} V^{(L)}(\x) +F(t,\x)
\]
with $F(t,\x)\in S^{-1}_{1,0}$, and hence 
\[
\WF(u)= \WF\bigpare{e^{i\s V^{(L)}(D_x)} e^{itH_0} e^{-itH} u}
\]
where $\s=\dfrac{t^{1+\b}}{1+\b}$. This implies 
\begin{equation}\label{eq4-2}
\WF\bigpare{e^{itH}u} =\WF\bigpare{e^{i\s V^{(L)}(D_x)} e^{itH_0} u}.
\end{equation}
Since $V(\x)$ is homogeneous of order $\b>1$, 
we can prove $e^{i\s V^{(L)}(D_x)}$ has the diffusive property: 

\begin{lem}
Let $N\in\ze_+$ and let $\s\neq 0$.  Then there is $C_N$ such that 
\begin{equation}\label{eq4-3}
\bignorm{\jap{x}^{-N} e^{i\s V^{(L)}(D_x)}u}_{H^s} \leq C_N \bignorm{\jap{x}^N u}, 
\quad u\in L^{2,\infty}(\re^d),
\end{equation}
where $s=(\b-1)N$. In particular, 
$e^{i\s V^{(L)}(D_x)}u\in C^\infty(\re^d)$ if $u\in L^{2,\infty}(\re^d)$. 
\end{lem}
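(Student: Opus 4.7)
The heart of the matter is converting rapid decay of $u$ into $H^s$-regularity of $v=e^{i\s V^{(L)}(D_x)}u$. The microlocal picture is transparent: the Fourier multiplier $e^{i\s V^{(L)}(D_x)}$ corresponds to the canonical relation $(x,\x)\mapsto(x+\s\nabla V^{(L)}(\x),\x)$, and Euler's identity together with the nondegeneracy hypothesis $\nabla V^{(L)}(\hat x)\neq 0$ gives $|\nabla V^{(L)}(\x)|\geq c|\x|^{\b-1}$ for $|\x|\geq 1$. Thus the factor $\jap{x}^{-N}$ on the left effectively forces $|\x|\lesssim \jap{x}^{1/(\b-1)}$, trading $N$ units of weighted decay for $s=(\b-1)N$ orders of regularity.

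Concretely, I would start from the commutator identity $[x_j,f(D_x)]=i(\pa_{\x_j}f)(D_x)$, applied to $f(\x)=e^{i\s V^{(L)}(\x)}$, which after rearrangement yields
\[
e^{i\s V^{(L)}(D_x)}\,x_j = \bigpare{x_j+\s(\pa_j V^{(L)})(D_x)}\,e^{i\s V^{(L)}(D_x)}.
\]
Iterating $N$ times carries any polynomial of degree $N$ across $e^{i\s V^{(L)}(D_x)}$ at the cost of introducing Fourier multipliers of total order at most $(\b-1)N$. Extending this to the non-polynomial weight $\jap{x}^{-N}$ by a Helffer--Sj\"ostrand functional calculus gives
\[
e^{i\s V^{(L)}(D_x)}\,\jap{x}^{-N} = \jap{x+\s\nabla V^{(L)}(D_x)}^{-N}\, e^{i\s V^{(L)}(D_x)} + R_N,
\]
with $R_N$ of lower order. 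Composing with $\jap{x}^{-N}$ on the left and absorbing the unitary $e^{i\s V^{(L)}(D_x)}$, the principal part of $\jap{x}^{-N}e^{i\s V^{(L)}(D_x)}\jap{x}^{-N}$ is the Weyl quantization of
\[
a(x,\x)=\jap{x}^{-N}\jap{x+\s\nabla V^{(L)}(\x)}^{-N}.
\]
Peetre's inequality, split on $|x|\lessgtr|\s\nabla V^{(L)}(\x)|/2$, yields
\[
a(x,\x)\leq C\jap{\s\nabla V^{(L)}(\x)}^{-N}\leq C_\s\jap{\x}^{-(\b-1)N},
\]
with analogous bounds on derivatives placing $a\in S^{-(\b-1)N}_{1,0}$. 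Standard $L^2$-boundedness for $\OPS S^{-(\b-1)N}_{1,0}$ then gives \eqref{eq4-3}. The second claim of the lemma follows by Sobolev embedding upon letting $N$ (hence $s$) be arbitrarily large.

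The delicate point is making the composition identity for $\jap{x}^{-N}$ rigorous, since $\jap{x}^{-N}$ is not a polynomial. A clean route is Helffer--Sj\"ostrand: represent $\jap{x}^{-N}$ as a contour integral of resolvents of $|x|^2$, conjugate each resolvent with $e^{i\s V^{(L)}(D_x)}$ using the polynomial (commutator) case, and reassemble, controlling errors by iterated commutator estimates. A more hands-on alternative is to bypass the symbolic calculus and analyze the oscillatory kernel of $\jap{x}^{-N}e^{i\s V^{(L)}(D_x)}\jap{y}^{-N}$ directly, splitting $\x$-integration at the threshold $|\x|\sim\jap{x}^{1/(\b-1)}$ where the phase is stationary and integrating by parts in the nonstationary region; the smoothness $\hat u\in H^\infty$ (from $u\in L^{2,\infty}$) supplies the $\x$-derivatives needed there. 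Either way, sharply matching $s=(\b-1)N$ requires careful bookkeeping of remainder orders.
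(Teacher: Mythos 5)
Your proposal rests on exactly the two ingredients the paper uses: the exact commutator identity $x_j e^{i\s V^{(L)}(D_x)} = e^{i\s V^{(L)}(D_x)}x_j - \s(\pa_{\x_j}V^{(L)})(D_x)e^{i\s V^{(L)}(D_x)}$, and the ellipticity $|\nabla V^{(L)}(\x)|\geq c|\x|^{\b-1}$ for $|\x|\geq 1$ coming from homogeneity plus the nondegeneracy hypothesis of Theorem~5. Where you diverge is in how the weight and the iteration are handled. The paper never conjugates $\jap{x}^{-N}$ through the propagator: it iterates the identity on monomials, e.g.
\[
x_j^2 e^{i\s\L}u=\s^2\bigpare{(\pa_{\x_j}V^{(L)})(D_x)}^2 e^{i\s\L}u+2\s(\pa_{\x_j}V^{(L)})(D_x)e^{i\s\L}(x_ju)+e^{i\s\L}(x_j^2u),
\]
solves at each stage for the highest power of $(\pa_{\x_j}V^{(L)})(D_x)$ applied to $e^{i\s\L}u$, multiplies by $\jap{x}^{-N}$ only at the end, and reads off membership in $H^{(\b-1)N}$ from the ellipticity by induction on $N$. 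This is entirely elementary: the only objects that appear are exact binomial expansions, and the "delicate point" you identify never arises. Your route instead pushes the non-polynomial weight $\jap{x}^{-N}$ across $e^{i\s V^{(L)}(D_x)}$ via Helffer--Sj\"ostrand and then invokes symbolic calculus with Peetre's inequality for $a(x,\x)=\jap{x}^{-N}\jap{x+\s\nabla V^{(L)}(\x)}^{-N}$; the order bookkeeping there is correct and does reproduce $s=(\b-1)N$ sharply, and the class you land in (derivatives of $\nabla V^{(L)}$ lose only $|\x|^{\b-2}$) is adequate for Calder\'on--Vaillancourt. So your argument can be completed, and it has the merit of exhibiting the conjugated weight as a genuine symbol of negative order; but it buys this at the cost of operator-ordering and remainder estimates that the paper's monomial iteration avoids altogether. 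If you carry out your plan, I would recommend replacing the functional-calculus step by the paper's device: prove the estimate for $\|x^\a\,(\text{multiplier})\,e^{i\s\L}u\|$ with $|\a|\leq N$ first and deduce the $\jap{x}^{-N}$-weighted bound afterwards, rather than the other way around.
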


\begin{proof}
For simplicity, we write $V^{(L)}(\x)=V(\x)$ and $\L=V(D_x)$ in this proof. 
By direct computation of the Fourier transform, we have 
\[
x_j e^{i\s\L} u = -\s (\pa_{\x_j}V)(D_x) e^{i\s\L} u+ e^{i\s\L}(x_j u),
\]
and this implies 
\[
\jap{x}^{-1}(\pa_{\x_j} V)(D_x) e^{i\s\L}u = -\s^{-1}\bigbra{x_j \jap{x}^{-1} e^{i\s\L}u 
-\jap{x}^{-1}e^{i\s\L}(x_j u)}\in L^2(\re^d). 
\]
By the assumption, we also have 
\[
\sum_{j=1}^d \bigabs{\pa_{\x_j} V(\x)} \geq c|\x|^{\b-1}, \quad |\x|\geq 1, 
\]
with some $c>0$, and these imply $\jap{x}^{-1} e^{i\s\L}u\in H^{\b-1}(\re^d)$, 
and its norm is bounded by $\norm{\jap{x}u}$. This proves \eqref{eq4-3} with $N=1$. 
Similarly we have 
\[
x_j^2 e^{i\s\L} u = \s^2(\pa_{\x_j}V(D_x))^2 e^{i\s\L}u +2\s (\pa_{\x_j}V)(D_x) e^{i\s\L}(x_j u)
+ e^{i\s\L} (x_j^2 u).
\]
This implies $\jap{x}^{-2}((\pa_{\x_j}V)(D_x))^2 e^{i\s\L}u\in L^2(\re^d)$ since we already 
know  $\jap{x}^{-1}(\pa_{\x_j} V)(D_x) e^{i\s\L}u\in L^2(\re^d)$. 
Summing up these estimates in $j$, we learn $\jap{x}^{-2}e^{i\s\L}u\in H^{2(\b-1)}(\re^d)$
and we obtain \eqref{eq4-3} for $N=2$. 
Iterating this procedure, we conclude \eqref{eq4-3} for any $N$. 
\end{proof}

The conclusion of Theorem~5 for $t<0$ now follows from \eqref{eq4-2} and Lemma~11. 
The case $t>0$ is proved similarly. 
\end{proof}

\end{document}